\documentclass[a4paper]{amsart}
\usepackage[T1]{fontenc}
\usepackage[british]{babel}
\usepackage{amssymb}
\usepackage{mathtools}
\usepackage[all]{xy}
\DeclareSymbolFont{bbold}{U}{bbold}{m}{n}
\DeclareSymbolFontAlphabet{\mathbbm}{bbold}
\newcommand{\M}{\mathcal{M}}
\newcommand{\N}{\mathcal{N}}
\newcommand{\B}{\mathbb{B}}
\newcommand{\C}{\mathbb{C}}
\newcommand{\D}{\mathbb{D}}
\newcommand{\bbot}{\mathbbm{0}}
\newcommand{\btop}{\mathbbm{1}}
\newcommand{\dd}{\mathfrak{d}}
\newcommand{\uu}{\mathfrak{u}}
\newcommand{\Mat}{\mathbb{M}}
\newcommand{\K}{\mathbb{K}}
\DeclareMathOperator{\dom}{dom}
\DeclareMathOperator{\cof}{cof}
\DeclareMathOperator{\add}{add}
\DeclareMathOperator{\non}{non}
\DeclareMathOperator{\Clop}{Clop}
\DeclarePairedDelimiter{\abs}{\lvert}{\rvert}
\DeclarePairedDelimiterX{\Set}[2]{\{}{\}}{\, #1 \mathclose{}\nonscript\;\delimsize\vert\nonscript\;\mathopen{} #2 \,}
\DeclarePairedDelimiterX{\Seq}[2]{\langle}{\rangle}{\, #1 \mathclose{}\nonscript\;\delimsize\vert\nonscript\;\mathopen{} #2 \,}
\DeclarePairedDelimiterXPP{\fin}[1]{}{[}{]}{^{<\aleph_0}}{#1}
\DeclarePairedDelimiterXPP{\eq}[2]{}{[}{]}{_{#2}}{#1}
\theoremstyle{plain}
\newtheorem{theorem}{Theorem}[section]
\newtheorem{proposition}[theorem]{Proposition}
\newtheorem{lemma}[theorem]{Lemma}
\newtheorem{corollary}[theorem]{Corollary}
\theoremstyle{definition}
\newtheorem{definition}[theorem]{Definition}
\newtheorem{question}[theorem]{Question}
\theoremstyle{remark}
\newtheorem{remark}[theorem]{Remark}
\begin{document}
\title{Combinatorics of ultrafilters on Cohen and random algebras}
\author{J\"org Brendle}
\author{Francesco Parente}
\thanks{The first author is partially supported by Grant-in-Aid for Scientific Research (C) 18K03398, Japan Society for the Promotion of Science. The second author is an International Research Fellow of the Japan Society for the Promotion of Science}
\address{Graduate School of System Informatics\\Kobe University\\1-1 Rokkodai-cho\\Nada-ku Kobe\\657-8501 Japan}
\begin{abstract}
We investigate the structure of ultrafilters on Boolean algebras in the framework of Tukey reducibility. In particular, this paper provides several techniques to construct ultrafilters which are not Tukey maximal. Furthermore, we connect this analysis with a cardinal invariant of Boolean algebras, the ultrafilter number, and prove consistency results concerning its possible values on Cohen and random algebras.
\end{abstract}
\maketitle

\section{Introduction}

Combinatorial properties of non-principal ultrafilters over $\omega$, that is, ultrafilters on the Boolean algebra $\mathcal{P}(\omega)/\mathrm{fin}$, have been extensively studied for the past half century. Key questions centred around the existence of ultrafilters with additional properties such as $P$-points, the Rudin-Keisler ordering, or cardinal invariants related to ultrafilters. Much less is known about ultrafilters on general Boolean algebras, although in recent years there has been considerable interest in their role in set theory and model theory. In particular, Malliaris and Shelah \cite{ms:dl} developed a new approach to Keisler's order based on morality of ultrafilters on Boolean algebras, whereas Goldstern, Kellner and Shelah \cite{gks:cm} used Boolean ultrapowers of forcing iterations to force all cardinals in Cicho\'n's diagram to be pairwise different.

In view of this situation, we investigate combinatorial aspects of ultrafilters on complete Boolean algebras, with a particular focus on the following two closely related topics:
\begin{itemize}
\item the existence of ultrafilters which are not Tukey maximal;
\item the ultrafilter number.
\end{itemize}
Our results are mostly (but not exclusively) about Cohen and random algebras.

Section \ref{section:due} begins by introducing the central notions of this paper. We then follow Kunen's framework \cite{kunen:rc} of index-invariant ideals to give examples of Boolean algebras on which every ultrafilter is Tukey maximal, answering an open question posed by Brown and Dobrinen \cite{bd:tukey}.

In Section \ref{section:tre} we discuss a cardinal invariant related to Tukey reducibility, namely the \emph{ultrafilter number} of Boolean algebras. In particular, we focus on the ultrafilter numbers of Cohen and random algebras, discussing their relation with previously studied cardinal invariants the continuum, and proving new consistency results about them via finite-support iterations.
	
The last two sections, on the other hand, deal with constructions of ultrafilters which are \emph{not} Tukey maximal. More specifically, in Section \ref{section:quattro} we construct non-maximal ultrafilters on Cohen algebras under the assumption that $\dd=2^{\aleph_0}$, whereas in Section \ref{section:cinque} we carry out such a construction on the random algebra, under the stronger assumption of $\Diamond$.

\section{Tukey-maximal ultrafilters}\label{section:due}

The notion of \emph{Tukey reducibility}, to which this section is dedicated, has its origin in the work of Tukey on convergence in general topology.

\begin{definition}[Tukey \cite{tukey:types}] Let $\bigl\langle D,\le^D\bigr\rangle$ and $\bigl\langle E,\le^E\bigr\rangle$ be directed sets. We define \[\bigl\langle D,\le^D\bigr\rangle\le_{\mathrm{T}}\bigl\langle E,\le^E\bigr\rangle\] if and only if there exist functions $f\colon D\to E$ and $g\colon E\to D$ such that for all $d\in D$ and $e\in E$
\[
f(d)\le^E e\implies d\le^D g(e).
\]
\end{definition}

The next theorem provides a general upper bound for directed sets of cardinality $\le\kappa$.

\begin{theorem}[{Tukey \cite[Theorem II-5.1]{tukey:types}}]\label{theorem:tukey} Let $\kappa$ be a cardinal and $\langle D,\le\rangle$ a directed set. If $\abs{D}\le\kappa$, then $\langle D,\le\rangle\le_{\mathrm{T}}\bigl\langle\fin*{\kappa},\subseteq\bigr\rangle$.
\end{theorem}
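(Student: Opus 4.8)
The plan is to construct directly the two maps required by the definition of Tukey reducibility. Since $\abs{D}\le\kappa$, I first fix an injection $\iota\colon D\to\kappa$, which labels the elements of $D$ by distinct ordinals below $\kappa$; this is the only place the cardinality hypothesis enters. I then define $f\colon D\to\fin*{\kappa}$ by sending $d$ to the singleton $f(d)=\{\iota(d)\}$, and I define $g\colon\fin*{\kappa}\to D$ by choosing, for each finite $s\subseteq\kappa$, an element $g(s)\in D$ that is an upper bound of the finite set $\Set{d\in D}{\iota(d)\in s}$. Such an upper bound exists: any nonempty finite subset of a directed set is bounded (by an easy induction on its size from the pairwise case), and when $\Set{d\in D}{\iota(d)\in s}$ is empty I simply take $g(s)$ to be a fixed $d_0\in D$.

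To verify the reduction, suppose $d\in D$ and $s\in\fin*{\kappa}$ satisfy $f(d)\subseteq s$. Unwinding the definition of $f$, this says precisely that $\iota(d)\in s$, so $d$ belongs to $\Set{d'\in D}{\iota(d')\in s}$, whence $d\le^D g(s)$ by the choice of $g(s)$ as an upper bound of this set. This is exactly the implication $f(d)\subseteq s\implies d\le^D g(s)$ demanded by the definition, so the pair $\langle f,g\rangle$ witnesses $\langle D,\le\rangle\le_{\mathrm{T}}\bigl\langle\fin*{\kappa},\subseteq\bigr\rangle$.

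I expect no genuine difficulty in this argument, as it is purely combinatorial and uses the structure of the directed set only through the boundedness of its finite subsets. The sole points requiring attention are the well-definedness of $g$---that is, the existence of the upper bounds used to define it---and the standing convention that a directed set is nonempty, which supplies the value $d_0$ needed to cover the case $s=\varnothing$.
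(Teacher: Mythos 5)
Your proof is correct: the singleton map $f(d)=\{\iota(d)\}$ together with the upper-bound map $g$ is exactly the standard witness for this reduction, and you rightly flag the only two delicate points (finite subsets of a directed set are bounded, and nonemptiness handles $s=\varnothing$). The paper itself gives no proof, citing Tukey directly, and your argument is the canonical one found there, so there is nothing to reconcile.
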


Given a directed set $\langle D,\le\rangle$, as usual we let $\cof(\langle D,\le\rangle)$ be the minimum cardinality of a cofinal subset of $D$. Furthermore, if $D$ has no maximum, let $\add(\langle D,\le\rangle)$ be the minimum cardinality of an unbounded subset of $D$.

\begin{proposition}[Schmidt \cite{schmidt:cof}]\label{proposition:schmidt} Let $\bigl\langle D,\le^D\bigr\rangle$ and $\bigl\langle E,\le^E\bigr\rangle$ be directed sets. If $D$ has no maximum and $\bigl\langle D,\le^D\bigr\rangle\le_{\mathrm{T}}\bigl\langle E,\le^E\bigr\rangle$, then
\[
\add(E)\le\add(D)\le\cof(D)\le\cof(E).
\]
\end{proposition}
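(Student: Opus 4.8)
The plan is to prove the three inequalities separately, the middle one being a general property of directed sets and the two outer ones following from the two witnessing functions. Fix $f\colon D\to E$ and $g\colon E\to D$ witnessing $\bigl\langle D,\le^D\bigr\rangle\le_{\mathrm{T}}\bigl\langle E,\le^E\bigr\rangle$, so that $f(d)\le^E e$ implies $d\le^D g(e)$ for all $d\in D$ and $e\in E$.

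First I would dispose of the middle inequality $\add(D)\le\cof(D)$, which holds for any directed set $D$ without maximum: any cofinal $C\subseteq D$ must be unbounded, since a bound for $C$ would dominate every element of $D$ by cofinality and hence be a maximum. Thus a cofinal set of minimal size is in particular unbounded, giving $\add(D)\le\cof(D)$. Before proceeding I would check that $\add(E)$ is meaningful, i.e.\ that $E$ has no maximum: were $e$ a maximum of $E$, then $f(d)\le^E e$ would hold for every $d\in D$, forcing $d\le^D g(e)$ for all $d$ and making $g(e)$ a maximum of $D$, contrary to hypothesis.

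For $\cof(D)\le\cof(E)$ the idea is that $g$ carries cofinal sets to cofinal sets. Given a cofinal $B\subseteq E$, I claim $g[B]$ is cofinal in $D$: for any $d\in D$ pick $e\in B$ with $f(d)\le^E e$, and the witnessing implication yields $d\le^D g(e)\in g[B]$. Choosing $B$ of size $\cof(E)$ gives the bound. Dually, for $\add(E)\le\add(D)$ the idea is that $f$ carries unbounded sets to unbounded sets. Given an unbounded $A\subseteq D$ of size $\add(D)$, if $f[A]$ were bounded by some $e\in E$ then $f(a)\le^E e$ for every $a\in A$ would force $a\le^D g(e)$ for every such $a$, bounding $A$ and contradicting its unboundedness; hence $f[A]$ is unbounded in $E$ and $\add(E)\le\abs{f[A]}\le\abs{A}=\add(D)$.

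There is no serious obstacle here, as the argument is a direct unwinding of the Tukey condition. The only points demanding care are getting the two functions pointing in the correct directions—$g$ for cofinality, $f$ for additivity—and confirming at the outset that $E$ inherits the absence of a maximum, so that $\add(E)$ is defined.
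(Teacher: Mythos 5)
Your proposal is correct: all three inequalities are established exactly as they should be, including the necessary preliminary check that $E$ has no maximum (so that $\add(E)$ is defined). The paper itself gives no proof of this proposition, citing it to Schmidt, and your argument is the standard direct unwinding of the Tukey condition ($g$ maps cofinal sets to cofinal sets, $f$ maps unbounded sets to unbounded sets), so there is nothing in the paper to compare it against.
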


Isbell \cite{isbell:cofinal} then initiated the study of Tukey reducibility of ultrafilters. Indeed, note that if $U$ is an ultrafilter on a Boolean algebra $\B$, then $\langle U,\ge\rangle$ is a directed set. Motivated by Theorem \ref{theorem:tukey}, we say that $U$ is \emph{Tukey maximal} if and only if $\bigl\langle\fin*{\B},\subseteq\bigr\rangle\le_{\mathrm{T}}\langle U,\ge\rangle$.

\begin{theorem}[{Isbell \cite[Theorem 5.4]{isbell:cofinal}}] For every infinite cardinal $\kappa$, there exists a Tukey-maximal ultrafilter over $\kappa$.
\end{theorem}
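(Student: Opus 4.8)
The plan is to lean on the upper bound that Theorem \ref{theorem:tukey} already supplies. Writing $\B=\mathcal{P}(\kappa)$, so that $\abs{\B}=2^\kappa$, and noting $\abs{U}\le 2^\kappa$, Theorem \ref{theorem:tukey} gives $\langle U,\ge\rangle\le_{\mathrm{T}}\bigl\langle\fin{2^\kappa},\subseteq\bigr\rangle\equiv_{\mathrm{T}}\bigl\langle\fin{\B},\subseteq\bigr\rangle$ for \emph{every} ultrafilter $U$ over $\kappa$. Hence it suffices to construct a single $U$ realising the reverse reduction $\bigl\langle\fin{\B},\subseteq\bigr\rangle\le_{\mathrm{T}}\langle U,\ge\rangle$, which is exactly Tukey maximality (and will in fact upgrade to $\equiv_{\mathrm{T}}$).

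First I would reformulate $\le_{\mathrm{T}}$ via unbounded maps: if $f\colon D\to E$ sends unbounded subsets of $D$ to unbounded subsets of $E$, then $D\le_{\mathrm{T}}E$, since one may set $g(e)$ to be an upper bound of the necessarily bounded set $\Set{d}{f(d)\le^E e}$. In our setting $\mathcal{X}\subseteq\fin{\B}$ is unbounded iff $\bigcup\mathcal{X}$ is infinite, while $Y\subseteq U$ is unbounded in $\langle U,\ge\rangle$ iff $\bigcap Y\notin U$. So I would seek a family $\Set{a_b}{b\in\B}\subseteq U$ and put $f(F)=\bigcap_{b\in F}a_b$, which lies in $U$ as a finite intersection. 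Because $\bigcap_{F\in\mathcal{X}}f(F)=\bigcap_{b\in\bigcup\mathcal{X}}a_b$, the map $f$ preserves unboundedness precisely when $\bigcap_{b\in S}a_b\notin U$ for every infinite $S\subseteq\B$; and passing to a countable subset of $S$ shows it is enough to arrange this for the $2^\kappa$ many countably infinite $S$.

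Thus everything reduces to producing an ultrafilter $U$ over $\kappa$ and a family $\Set{a_\xi}{\xi<2^\kappa}\subseteq U$ no countably infinite subfamily of which has its intersection in $U$; equivalently, setting $b_\xi=\kappa\setminus a_\xi$, each $b_\xi\notin U$ while every countably infinite union $\bigcup_{\xi\in I}b_\xi$ lies in $U$. For the skeleton I would take an independent family $\Set{a_\xi}{\xi<2^\kappa}$ on $\kappa$ (which exists by the Fichtenholz--Kantorovich/Hausdorff theorem): independence yields the finite intersection property, so the $a_\xi$ generate a proper filter that I keep inside $U$, and this keeps every finite intersection in $U$ for free. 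Enumerating the countably infinite $I\subseteq 2^\kappa$ as $\Seq{I_\alpha}{\alpha<2^\kappa}$, I would build an increasing chain of proper filters, at stage $\alpha$ adjoining $\bigcup_{\xi\in I_\alpha}b_\xi$ so as to commit $\bigcap_{\xi\in I_\alpha}a_\xi\notin U$ (taking unions at limits, which preserve properness), and finally extend to an ultrafilter.

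The hard part will be showing the recursion never breaks down, i.e. that each filter stays proper after adjoining $\bigcup_{\xi\in I_\alpha}b_\xi$. This fails exactly when some set $G$ already in the filter satisfies $G\subseteq\bigcap_{\xi\in I_\alpha}a_\xi$, which would force that intersection \emph{into} $U$. A typical such $G$ is a finite intersection of generators $a_\xi$ with finitely many previously adjoined clauses $\bigcup_{\zeta\in I_\beta}b_\zeta$, and I must rule out $G\subseteq\bigcap_{\xi\in I_\alpha}a_\xi$, that is, simultaneously realise by a point of $\kappa$ the chosen literals $a_\xi$ together with a single $b_\zeta$ ($\zeta\in I_\alpha$) and the finitely many clauses. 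Since the clauses are disjunctions of complemented generators, bare independence, which governs only finite conjunctions of generators and their complements, does not apply directly; resolving this satisfiability is the crux. I would handle it by choosing the independent family generically enough that every such finite system admits one literal per clause consistently, or, more robustly, by constructing the family and the ultrafilter together by recursion on $2^\kappa$, at each step either introducing a sufficiently independent new generator or killing one countable intersection, with bookkeeping guaranteeing that every clause always retains infinitely many still-free indices with which to be satisfied. Once properness is preserved throughout, the resulting $U$ and family yield the desired unbounded map, and together with the automatic upper bound we obtain $\langle U,\ge\rangle\equiv_{\mathrm{T}}\bigl\langle\fin{\B},\subseteq\bigr\rangle$, so $U$ is Tukey maximal.
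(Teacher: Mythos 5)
Your overall strategy is sound, and it is in essence Isbell's original argument (which the paper cites without reproving): reduce Tukey maximality to finding an ultrafilter $U$ over $\kappa$ together with a family $\Set{a_\xi}{\xi<2^\kappa}\subseteq U$ no countably infinite subfamily of which has intersection in $U$ --- this is exactly the criterion of Proposition \ref{proposition:criterion} applied with $2^\kappa=\abs{\mathcal{P}(\kappa)}$ --- and then manufacture such a family from an independent family by forcing the complements $\kappa\setminus\bigcap_{\xi\in I}a_\xi$ into the ultrafilter. Your reduction to this combinatorial statement is carried out correctly.

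The problem is the step you explicitly leave unresolved: the ``satisfiability'' question you call the crux is in fact settled by bare independence, and your proposed remedies (a ``generic'' choice of independent family, or a joint recursion with bookkeeping) are both unnecessary and too vague to count as a proof. Concretely, to verify properness you must find a point of $\kappa$ lying in finitely many generators $a_{\xi_1},\dots,a_{\xi_n}$ and finitely many clauses $C_j=\bigcup_{\zeta\in I_j}(\kappa\setminus a_\zeta)$ for $j\le m$ (whether a clause was adjoined earlier or is the new one is irrelevant). A disjunction is satisfied as soon as one disjunct is, and each $I_j$ is infinite, so you may choose indices $\zeta_1\in I_1,\dots,\zeta_m\in I_m$ pairwise distinct and distinct from $\xi_1,\dots,\xi_n$; independence of the family then gives
\[
a_{\xi_1}\cap\dots\cap a_{\xi_n}\cap(\kappa\setminus a_{\zeta_1})\cap\dots\cap(\kappa\setminus a_{\zeta_m})\neq\emptyset,
\]
and this set is contained in $a_{\xi_1}\cap\dots\cap a_{\xi_n}\cap C_1\cap\dots\cap C_m$. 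Your mistake was to treat the previously adjoined clauses as monolithic constraints while reducing only the new clause to a single literal; reduce \emph{every} clause to one fresh literal and independence applies verbatim. In particular no transfinite recursion is needed at all: the family $\Set{a_\xi}{\xi<2^\kappa}\cup\Set{\kappa\setminus\bigcap_{\xi\in I}a_\xi}{I\in[2^\kappa]^{\aleph_0}}$ has the finite intersection property outright, and any ultrafilter extending the filter it generates is Tukey maximal by your own reduction.
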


The above theorem stimulated a fruitful line of research, which is nicely surveyed by Dobrinen \cite{dobrinen:tukey} and constitutes the main motivation for this work. The following criterion is particularly useful to determine whether an ultrafilter is Tukey maximal. We omit the proof, which is the same as Dobrinen and Todor\v{c}evi\'c \cite[Fact 12]{dt:tukey}.

\begin{proposition}\label{proposition:criterion} Let $\kappa$ be an infinite cardinal. For an ultrafilter $U$ on a Boolean algebra $\B$, the following conditions are equivalent:
\begin{itemize}
\item $\bigl\langle\fin*{\kappa},\subseteq\bigr\rangle\le_{\mathrm{T}}\langle U,\ge\rangle$;
\item there exists a subset $X\subseteq U$ with $\abs{X}=\kappa$ such that every infinite $Y\subseteq X$ is unbounded in $U$.
\end{itemize}
\end{proposition}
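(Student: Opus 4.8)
The plan is to prove both implications by exploiting two elementary observations: a subset $A\subseteq\fin*{\kappa}$ is unbounded precisely when $\bigcup A$ is infinite, and a finite meet of elements of $U$ again lies in $U$. Throughout I will use that, for $Y\subseteq U$, being bounded in $\langle U,\ge\rangle$ means exactly that some single $w\in U$ satisfies $w\le y$ for every $y\in Y$ in the Boolean order of $\B$; so ``unbounded'' translates into ``no element of $U$ lies below the whole set''.

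For the implication from the second condition to the first, I would fix an injective enumeration $X=\Set{x_\alpha}{\alpha<\kappa}$ and define $f\colon\fin*{\kappa}\to U$ by $f(s)=\bigwedge_{\alpha\in s}x_\alpha$ (with $f(\emptyset)=\btop$), which lands in $U$ because $s$ is finite. The companion map should be $g(w)=\Set{\alpha<\kappa}{w\le x_\alpha}$. The first thing to check is that $g$ really takes values in $\fin*{\kappa}$: if $g(w)$ were infinite, then $\Set{x_\alpha}{\alpha\in g(w)}$ would be an infinite subset of $X$ with $w$ lying below all of its members, contradicting the hypothesis that every infinite subset of $X$ is unbounded. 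Granting this, the Tukey inequality $f(s)\le^E w\Rightarrow s\subseteq g(w)$ is immediate, since $f(s)\le^E w$ unwinds to $w\le\bigwedge_{\alpha\in s}x_\alpha$, hence $w\le x_\alpha$ for each $\alpha\in s$, that is, $s\subseteq g(w)$.

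For the reverse implication, I would start from witnessing maps $f,g$ and first record that $f$ sends unbounded sets to unbounded sets: if $f[A]$ were bounded by some $w\in U$, then $w\le f(s)$ for all $s\in A$ forces $s\subseteq g(w)$ for all such $s$, so $\bigcup A\subseteq g(w)$ is finite and $A$ is bounded in $\fin*{\kappa}$. Now set $x_\alpha=f(\{\alpha\})$ and $X=\Set{x_\alpha}{\alpha<\kappa}$. Since every singleton family $\Set{\{\alpha\}}{\alpha\in S}$ with $S$ infinite is unbounded, its image $\Set{x_\alpha}{\alpha\in S}$ is unbounded in $U$; in particular no value can be attained infinitely often (else that image would reduce to a single element, which is bounded), so each fibre of $\alpha\mapsto x_\alpha$ is finite and therefore $\abs{X}=\kappa$. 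Finally, any infinite $Y\subseteq X$ can be written as $\Set{x_\alpha}{\alpha\in S}$ for a suitable infinite $S\subseteq\kappa$, by choosing one index per element of $Y$, so $Y$ is unbounded.

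I expect the main difficulty to be bookkeeping rather than a genuine obstacle: the two delicate points are showing that $g$ is finite-valued in the first implication and guaranteeing $\abs{X}=\kappa$ exactly in the second, both of which hinge on the same remark that a value repeated infinitely often would yield a bounded infinite subset of $X$. Keeping the direction of the Boolean order straight when passing between $\le^E$ on $U$ and $\le$ on $\B$ is the only other place where an error could slip in.
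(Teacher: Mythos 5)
Your proof is correct, and it is essentially the standard argument: the paper itself omits the proof, referring to Dobrinen and Todor\v{c}evi\'c [Fact 12], whose proof runs along exactly the lines you give (meets of finite subsets of $X$ in one direction, the unbounded-set-preserving property of $f$ applied to $f(\{\alpha\})$ in the other). Both delicate points you flag --- finiteness of $g(w)$ and finiteness of the fibres of $\alpha\mapsto f(\{\alpha\})$ --- are handled correctly, so nothing is missing.
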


At this point, it is appropriate to discuss the connection with the Rudin-Keisler ordering and ultrafilters over $\omega$. Recall that, whenever $U$ and $V$ are ultrafilters over $\omega$, we have $U\le_\mathrm{RK} V$ if there exists a function $f\colon\omega\to\omega$ such that for all $X\subseteq\omega$
\[
X\in U\iff f^{-1}[X]\in V.
\]
Now, as noted by Dobrinen and Todor\v{c}evi\'c \cite[Fact 1]{dt:tukey}, if $U\le_\mathrm{RK} V$ then $\langle U,\supseteq\rangle\le_\mathrm{T}\langle V,\supseteq\rangle$. We aim to show that this implication is also true, in the Boolean-algebraic context, for the generalized Rudin-Keisler ordering introduced by Murakami.

\begin{definition}[Murakami \cite{murakami:rk}] Let $\B$ and $\C$ be complete Boolean algebras. For ultrafilters $U\subset\B$ and $V\subset\C$, we define $U\le_\mathrm{RK} V$ if there exist $v\in V$ and a complete homomorphism $f\colon\B\to\C\mathbin{\upharpoonright} v$ such that $U=f^{-1}[V]$.
\end{definition}

\begin{proposition}\label{proposition:rk} Let $\B$ and $\C$ be complete Boolean algebras, with ultrafilters $U\subset\B$ and $V\subset\C$. If $U\le_\mathrm{RK}V$, then $\langle U,\ge\rangle\le_\mathrm{T}\langle V,\ge\rangle$.
\end{proposition}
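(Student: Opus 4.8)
The plan is to unwind the definition of $\le_\mathrm{RK}$ and produce the two Tukey maps directly. Suppose $U\le_\mathrm{RK}V$ is witnessed by $v\in V$ and a complete homomorphism $f\colon\B\to\C\mathbin{\upharpoonright}v$ with $U=f^{-1}[V]$. Recall that the directed orderings in question are $\langle U,\ge\rangle$ and $\langle V,\ge\rangle$, so a Tukey map must send larger filter-elements to larger ones and reflect the order. I would define the map $\varphi\colon U\to V$ by $\varphi(u)=f(u)$ (noting that $u\in U=f^{-1}[V]$ gives $f(u)\in V$, so this is well-defined), and the companion map $\psi\colon V\to U$ by first intersecting with $v$ and then pulling back, roughly $\psi(w)=f^{-1}\!\bigl(w\wedge v\bigr)$ or the analogous expression making sense of the restriction.

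The key verification is the Tukey condition: for all $u\in U$ and $w\in V$,
\[
\varphi(u)\ge w\implies u\ge\psi(w).
\]
Here $\ge$ is the Boolean order, so $\varphi(u)\ge w$ means $f(u)\ge w$ in $\C$, and I must deduce $u\ge\psi(w)$ in $\B$. The main structural fact I would exploit is that $f$, being a complete homomorphism onto $\C\mathbin{\upharpoonright}v$, preserves all the Boolean operations relevant here, and that pulling back along $f$ is monotone. From $f(u)\ge w$ one gets $f(u)\ge w\wedge v$ (since $f(u)\le v$ automatically, as $f$ lands in $\C\mathbin{\upharpoonright}v$, so in fact $w\wedge v=w$ when $w\le v$), and then applying the order-preservation and the homomorphism property of $f^{-1}$ on the relevant interval should yield $u\ge\psi(w)$.

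The step I expect to be the main obstacle is handling the restriction to $\C\mathbin{\upharpoonright}v$ cleanly, in particular making sure every element $w$ fed into $\psi$ is correctly brought below $v$ before pulling back, and checking that the preimage $f^{-1}$ of a complete homomorphism behaves well enough on non-range elements. Since $f$ need not be surjective onto $\C\mathbin{\upharpoonright}v$ and its image is only guaranteed to be a complete subalgebra, I would need to verify that the preimage operation, suitably defined via the equivalence $f(u)\ge w\iff u\ge f^{-1}(w)$ provided by completeness (taking $f^{-1}(w)=\sup\Set{b\in\B}{f(b)\le w}$), is the right choice and that it makes the Tukey inequality hold on the nose. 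Once the maps are pinned down, the two verifications (well-definedness of $\varphi$ and the implication above) should both reduce to monotonicity and the defining adjunction between $f$ and its preimage, so the remaining work is routine Boolean-algebra bookkeeping rather than anything deep.
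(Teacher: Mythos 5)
Your overall strategy is the same as the paper's (use $f$ itself as the forward map and a completeness-derived adjoint of $f$ as the backward map), but the concrete formula you give for the backward map is the adjoint on the wrong side, and this breaks the proof. With $\psi(w)=\sup\Set{b\in\B}{f(b)\le w}$, the adjunction that actually holds is $f(b)\le w\iff b\le\psi(w)$; the equivalence you assert, $f(u)\ge w\iff u\ge\psi(w)$, is false for this formula. The problem is the kernel of $f$: the definition of $\le_\mathrm{RK}$ does not require $f$ to be injective, and every $b$ with $f(b)=\bbot$ satisfies $f(b)\le w$, hence lies below $\psi(w)$. So $u\ge\psi(w)$ forces $u$ to dominate the entire kernel, which is simply not implied by $f(u)\ge w$. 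A minimal counterexample: let $\B=\{\bbot,a,\neg a,\btop\}$, let $\C$ be the two-element algebra, $v=\btop$, and let $f$ be the quotient map with $f(a)=\bbot$, $f(\neg a)=\btop$; then $V=\{\btop\}$, $U=f^{-1}[V]=\{\neg a,\btop\}$, and taking $u=\neg a$, $w=\btop$ we have $f(u)\ge w$ but $\psi(w)=\btop\nleq\neg a$, so the Tukey condition fails. A further (related) defect is that nothing in your definition guarantees $\psi(w)\in U$: you only get $f(\psi(w))\le w$, which gives no membership information, whereas a Tukey witness must map $V$ into $U$.

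The fix is to take the adjoint on the other side, which is what the paper does: define $g(w)=\bigwedge\Set{b\in\B}{w\le f(b)}$. Then the needed implication is immediate, since $w\le f(u)$ puts $u$ into the set being intersected, so $g(w)\le u$; and completeness of $f$ gives $f(g(w))=\bigwedge\Set{f(b)}{w\le f(b)}\ge w$, whence $f(g(w))\in V$ and therefore $g(w)\in U$, so $g$ really does map $V$ into $U$. Note that this $g$ satisfies exactly the equivalence you wanted ($f(u)\ge w\iff u\ge g(w)$); your instinct about which adjunction is needed was correct, but the $\sup$-of-elements-mapping-below formula does not realize it --- the $\inf$-of-elements-mapping-above formula does. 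The restriction to $\C\mathbin{\upharpoonright}v$ is then harmless: for $w\in V$ one can equivalently work with $w\wedge v\in V$, and the argument goes through verbatim.
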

\begin{proof} Suppose there exist $v\in V$ and a complete homomorphism $f\colon\B\to\C\mathbin{\upharpoonright} v$ such that $U=f^{-1}[V]$. Let us define
\[
\begin{split}
g\colon\C &\longrightarrow\B \\
c &\longmapsto\bigwedge\Set{b\in\B}{c\le f(b)}
\end{split}
\]
and note that $c\le f(b)$ implies $g(c)\le b$. Furthermore, observe that if $c\in V$ then, by completeness of the homomorphism $f$,
\[
f(g(c))=f\Bigl(\bigwedge\Set{b\in\B}{c\le f(b)}\Bigr)=\bigwedge\Set{f(b)}{c\le f(b)}\ge c,
\]
hence $f(g(c))\in V$, which means $g(c)\in U$. In conclusion, the maps $f$ and $g$ witness that $\langle U,\ge\rangle\le_\mathrm{T}\langle V,\ge\rangle$.
\end{proof}

The following is a straightforward corollary which clarifies the relationship to ultrafilters over $\omega$.

\begin{corollary}\label{corollary:rkomega} Let $\B$ be a complete Boolean algebra. For an ultrafilter $V\subset\B$, the following conditions are equivalent:
\begin{enumerate}
\item\label{corollary:rkomegauno} $V$ is not $\sigma$-complete;
\item\label{corollary:rkomegadue} there exists a non-principal ultrafilter $U$ over $\omega$ such that $U\le_\mathrm{RK}V$;
\item\label{corollary:rkomegatre} there exists a non-principal ultrafilter $U$ over $\omega$ such that $\langle U,\supseteq\rangle\le_\mathrm{T}\langle V,\ge\rangle$.
\end{enumerate}
\end{corollary}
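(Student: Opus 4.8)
The plan is to prove the cycle of implications $(1)\Rightarrow(2)\Rightarrow(3)\Rightarrow(1)$. The implication $(2)\Rightarrow(3)$ is immediate: it is exactly Proposition \ref{proposition:rk} applied with $\C=\mathcal{P}(\omega)$, once one observes that on $\mathcal{P}(\omega)$ the order $\ge$ is reverse inclusion $\supseteq$, so that the conclusion $\langle U,\ge\rangle\le_\mathrm{T}\langle V,\ge\rangle$ is literally condition $(3)$. The two directions requiring genuine work are $(1)\Rightarrow(2)$, which calls for an explicit construction, and $(3)\Rightarrow(1)$, which I would settle through cardinal invariants.

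For $(1)\Rightarrow(2)$, suppose $V$ is not $\sigma$-complete and fix a sequence $\Seq{v_n}{n\in\omega}$ of elements of $V$ witnessing this; replacing $v_n$ by $v_0\wedge\dots\wedge v_n$, I may assume it is decreasing, and I set $w=\bigwedge_n v_n\notin V$. The key idea is to convert this sequence into a countable partition: put $a_n=v_n\wedge\neg v_{n+1}$, so that the $a_n$ are pairwise disjoint and telescope to $\bigvee_n a_n=v_0\wedge\neg w$, which I denote $v$ and which lies in $V$ since $v_0\in V$ and $w\notin V$. I then define $f\colon\mathcal{P}(\omega)\to\B\mathbin{\upharpoonright}v$ by $f(X)=\bigvee_{n\in X}a_n$; because $\Set{a_n}{n\in\omega}$ partitions $v$, this $f$ is a complete homomorphism, and $U=f^{-1}[V]$ witnesses $U\le_\mathrm{RK}V$ in the sense of Murakami's definition. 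It remains to verify that $U$ is a non-principal ultrafilter: it is an ultrafilter because $f$ is a complete homomorphism with $f(\omega)=v\in V$ and $V$ is prime, and it is non-principal because each $a_n=v_n\wedge\neg v_{n+1}\notin V$ (as $v_{n+1}\in V$) while finite joins of elements outside an ultrafilter remain outside it, so no finite subset of $\omega$ belongs to $U$.

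For $(3)\Rightarrow(1)$, I would argue contrapositively through Schmidt's inequality (Proposition \ref{proposition:schmidt}). The point is that every non-principal ultrafilter $U$ over $\omega$ satisfies $\add(\langle U,\supseteq\rangle)=\aleph_0$, witnessed by the cofinite sets $\Set{[n,\omega)}{n\in\omega}\subseteq U$, whose intersection is empty and which are therefore unbounded in $\langle U,\supseteq\rangle$. If $\langle U,\supseteq\rangle\le_\mathrm{T}\langle V,\ge\rangle$, then since $U$ has no maximum neither does $V$, and Proposition \ref{proposition:schmidt} yields $\add(\langle V,\ge\rangle)\le\aleph_0$; combined with the general lower bound $\add\ge\aleph_0$ for directed sets with no maximum, this produces a countable unbounded subset $S\subseteq V$. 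Being unbounded in $\langle V,\ge\rangle$ means exactly that $\bigwedge S\notin V$, so $V$ is not $\sigma$-complete, as required.

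I expect the main obstacle to be the construction in $(1)\Rightarrow(2)$: the crucial insight is that a failure of $\sigma$-completeness gives not merely a bad countable meet, but, after passing to the decreasing sequence and its successive differences $a_n$, a genuine countable partition of an element of $V$ — precisely the combinatorial data needed to manufacture a complete homomorphism out of $\mathcal{P}(\omega)$. Once the partition is in hand, checking that $f$ is complete and that $U$ is non-principal is routine, and the remaining two implications follow from the results already established above.
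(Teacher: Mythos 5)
Your proposal is correct and takes essentially the same route as the paper: $(2)\Rightarrow(3)$ is Proposition \ref{proposition:rk}, $(3)\Rightarrow(1)$ is Schmidt's inequality giving $\add(V)\le\add(U)=\aleph_0$, and $(1)\Rightarrow(2)$ converts the failure of $\sigma$-completeness into a countable antichain disjoint from $V$ and maps $\mathcal{P}(\omega)$ in by taking joins. The only cosmetic difference is in $(1)\Rightarrow(2)$: the paper asserts outright a maximal antichain $\Set{a_i}{i<\omega}$ in $\B$ avoiding $V$ (so the homomorphism lands in $\B=\B\mathbin{\upharpoonright}\btop$), whereas you derive the partition $\Set{a_n}{n<\omega}$ of $v=v_0\wedge\neg w$ explicitly from a decreasing sequence and work in $\B\mathbin{\upharpoonright}v$ --- thereby filling in a detail the paper leaves implicit.
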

\begin{proof} $(\ref{corollary:rkomegauno}\Longrightarrow\ref{corollary:rkomegadue})$ Assuming $V$ is not $\sigma$-complete, there exists a maximal antichain $\Set{a_i}{i<\omega}$ in $\B$ such that for all $i<\omega$, $a_i\notin V$. The function
\[
\begin{split}
f\colon\mathcal{P}(\omega) &\longrightarrow \B \\
X &\longmapsto \bigvee\Set{a_i}{i\in X}
\end{split}
\]
is a complete homomorphism, hence $U=f^{-1}[V]$ is the desired ultrafilter.

$(\ref{corollary:rkomegadue}\Longrightarrow\ref{corollary:rkomegatre})$ By Proposition \ref{proposition:rk}.

$(\ref{corollary:rkomegatre}\Longrightarrow\ref{corollary:rkomegauno})$ Suppose $U$ is a non-principal ultrafilter over $\omega$ such that $\langle U,\supseteq\rangle\le_\mathrm{T}\langle V,\ge\rangle$. By Proposition \ref{proposition:schmidt} we have $\add(V)\le\add(U)=\aleph_0$, implying $V$ is not $\sigma$-complete.
\end{proof}

In the remainder of this section, we introduce the framework of Kunen \cite{kunen:rc} and show that certain Boolean algebras have only ultrafilters which are Tukey maximal.

\begin{definition} A $\sigma$-ideal $I$ over $^\omega2$ is \emph{index invariant} if for every injective function $\Delta\colon\omega\to\omega$ and $X\subseteq{^\omega}2$ we have
\[
X\in I\iff\Set{f\in{^\omega}2}{f\circ\Delta\in X}\in I.
\]

Let $I$ be an index-invariant $\sigma$-ideal over $^\omega2$ and $\alpha$ an infinite ordinal. We define $I(\alpha)\subset\mathcal{P}({^\alpha}2)$ as follows: $X\in I(\alpha)$ if and only if there exist an injective function $\Delta\colon\omega\to\alpha$ and a set $Y\in I$ such that for all $f\in X$, $f\circ\Delta\in Y$.
\end{definition}

Prototypical examples of index-invariant $\sigma$-ideals are of course the meagre ideal $\M$ and the null ideal $\N$, which will be discussed in greater detail from Section \ref{section:tre}.

\begin{lemma}[{Kunen \cite[Lemma 1.5]{kunen:rc}}]\label{lemma:kunen} Let $I$ be an index-invariant $\sigma$-ideal over $^\omega2$. For every infinite ordinal $\alpha$, $I(\alpha)$ is indeed a $\sigma$-ideal over $^\alpha2$.

Furthermore, $I(\alpha)$ is also ``index invariant'' in the sense that, for every injective function $\Gamma\colon\alpha\to\beta$ and $X\subseteq{^\alpha}2$,
\[
X\in I(\alpha)\iff\Set*{f\in{^\beta}2}{f\circ\Gamma\in X}\in I(\beta).
\]
\end{lemma}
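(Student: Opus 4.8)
The plan is to prove the two assertions separately. For the first, that $I(\alpha)$ is a $\sigma$-ideal, I would verify the three defining properties: closure under subsets, closure under countable unions, and properness (that ${^\alpha}2 \notin I(\alpha)$). Closure under subsets is immediate from the definition, since if $X \in I(\alpha)$ is witnessed by some injective $\Delta \colon \omega \to \alpha$ and $Y \in I$, then any $X' \subseteq X$ is witnessed by the very same $\Delta$ and $Y$. The subtle point is closure under countable unions: given sets $X_n \in I(\alpha)$ for $n < \omega$, each comes with its own witness $\langle \Delta_n, Y_n \rangle$, and I must produce a \emph{single} injective $\Delta \colon \omega \to \alpha$ and a single $Y \in I$ working for $\bigcup_n X_n$.

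The key idea for the union will be to merge the ranges of the countably many $\Delta_n$ into one countable set $A = \bigcup_n \operatorname{ran}(\Delta_n) \subseteq \alpha$, which is countably infinite, and fix an injective enumeration $\Delta \colon \omega \to \alpha$ with $\operatorname{ran}(\Delta) = A$. For each $n$, the composition $\Delta^{-1} \circ \Delta_n \colon \omega \to \omega$ is an injection, so by index invariance of $I$ the set $Y'_n = \Set{g \in {^\omega}2}{g \circ (\Delta^{-1}\circ\Delta_n) \in Y_n}$ lies in $I$; one then checks that $f \in X_n$ implies $f \circ \Delta \in Y'_n$. Setting $Y = \bigcup_n Y'_n \in I$ (using that $I$ is a $\sigma$-ideal) gives that $\Delta$ and $Y$ witness $\bigcup_n X_n \in I(\alpha)$. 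For properness, I would argue that if ${^\alpha}2 \in I(\alpha)$ were witnessed by $\Delta$ and $Y$, then every $g \in {^\omega}2$ arises as $f \circ \Delta$ for some $f \in {^\alpha}2$, forcing $Y = {^\omega}2$, contradicting $Y \in I$ since $I$ is proper.

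For the second assertion, the ``index invariance'' of $I(\alpha)$ along an injection $\Gamma \colon \alpha \to \beta$, I would prove both directions of the biconditional. The forward direction is the more structural one: assuming $X \in I(\alpha)$ via $\Delta$ and $Y$, the composite $\Gamma \circ \Delta \colon \omega \to \beta$ is injective, and I would show it witnesses $\Set*{f \in {^\beta}2}{f \circ \Gamma \in X} \in I(\beta)$, since $f \circ (\Gamma\circ\Delta) = (f \circ \Gamma) \circ \Delta \in Y$ whenever $f \circ \Gamma \in X$. The converse direction is where I expect the main obstacle: from a witness $\langle \Delta', Y' \rangle$ for the pullback set along some $\Delta' \colon \omega \to \beta$ whose range need not lie inside $\operatorname{ran}(\Gamma)$, I must recover a witness for $X$ itself. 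The standard move is to split $\operatorname{ran}(\Delta')$ into the part inside $\operatorname{ran}(\Gamma)$ and the part outside, use index invariance of $I$ to discard or reindex the outside coordinates, and then compose with $\Gamma^{-1}$ on the inside part; making this bookkeeping precise, and handling the degenerate case where $\operatorname{ran}(\Delta') \cap \operatorname{ran}(\Gamma)$ is finite, is the delicate part of the argument.
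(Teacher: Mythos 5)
Your proof of the first assertion is correct and complete: closure under subsets is indeed immediate, the merging of the ranges of the $\Delta_n$ into a single countable set $A$ together with the pullbacks $Y_n'$ along the injections $\Delta^{-1}\circ\Delta_n$ is exactly the right use of index invariance, and the properness argument works. The forward direction of the invariance claim is likewise correct (it is pure composition and needs no invariance at all). The genuine gap is the converse direction, which you only sketch, and the sketch leans on a step that would fail as stated. Writing $N=\Set{n<\omega}{\Delta'(n)\in\operatorname{ran}(\Gamma)}$, your plan to ``use index invariance of $I$ to discard\dots the outside coordinates'' amounts to replacing $Y'$ by its projection onto the coordinates in $N$; but the projection of a set in $I$ need not lie in $I$. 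Concretely, for the null ideal the set $Y'=\Set{h\in{^\omega}2}{\forall n\ h(2n+1)=0}$ is null, while its projection to the even coordinates is all of ${^\omega}2$. So no witness for $X\in I(\alpha)$ can be extracted by pushing $Y'$ forward, and this is precisely the ``delicate part'' you left open. (For the record, the paper itself gives no proof here --- it cites Kunen's Lemma 1.5 --- so your attempt has to stand on its own.)

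The missing idea is to go in the opposite direction, exploiting that for $g\in X$ the values of an extension $f$ (with $f\circ\Gamma=g$) are completely unconstrained off $\operatorname{ran}(\Gamma)$, and those free coordinates are exactly the $\Delta'(n)$ with $n\notin N$. Hence \emph{every} $h\in{^\omega}2$ that agrees with $g\circ\Gamma^{-1}\circ\Delta'$ on $N$ equals $f\circ\Delta'$ for some $f\in X_\Gamma$ and therefore lies in $Y'$: whole cylinders over $N$ land inside $Y'$. One must then take $Y$ to be a ``co-projection'' rather than an image: if $N$ is infinite, let $e\colon\omega\to N$ be the increasing enumeration, put $\Delta=\Gamma^{-1}\circ\Delta'\circ e$ and $Y=\Set*{y\in{^\omega}2}{\forall h\,(h\circ e=y\Rightarrow h\in Y')}$; then $\Set{h}{h\circ e\in Y}\subseteq Y'$, so this set is in $I$ by downward closure, so $Y\in I$ by index invariance applied to $e$, and the cylinder observation gives $g\circ\Delta\in Y$ for all $g\in X$. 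Your worry about the case where $N$ is finite is legitimate in one respect: one cannot dismiss it by deriving a contradiction, since a nonempty finite cylinder can belong to an index-invariant $\sigma$-ideal (for instance $I=\Set{Z\subseteq{^\omega}2}{\bar{1}\notin Z}$, $\bar{1}$ the constant-one function, is such an ideal). But it is handled constructively by the same device: extend $\Gamma^{-1}\circ\Delta'\mathbin{\upharpoonright}N$ to any injection $\Delta\colon\omega\to\alpha$ and let $Y$ be the union of those finitely many cylinders $\Set{h}{h\mathbin{\upharpoonright}N=t}$, $t\in{^N}2$, that are contained in $Y'$; then $Y\subseteq Y'$, and again $g\circ\Delta\in Y$ for every $g\in X$. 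Without this fibre-wise argument your outline does not constitute a proof of the backward implication.
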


Let $\Clop(^\alpha2)$ be the Boolean algebra of clopen subsets of the Cantor space $^\alpha2$, and $\mathcal{B}(^\alpha2)$ be the $\sigma$-algebra generated by $\Clop(^\alpha2)$; note that $\abs{\Clop(^\alpha2)}=\abs{\alpha}$ whereas $\abs{\mathcal{B}(^\alpha2)}=\abs{\alpha}^{\aleph_0}$. If $I$ is an index-invariant $\sigma$-ideal over $^\omega2$, let
\begin{equation}\label{eq:quot}
\B(I,\alpha)=\mathcal{B}(^\alpha2)/I(\alpha)
\end{equation}
be the quotient $\sigma$-algebra.

\begin{remark}\label{remark:induce} By Lemma \ref{lemma:kunen}, every injective function $\Gamma\colon\alpha\to\beta$ induces a $\sigma$-complete embedding $\Gamma_*\colon\B(I,\alpha)\to\B(I,\beta)$ such that if $X\in\mathcal{B}(^\alpha2)$ then
\[\Gamma_*\bigl(\eq*{X}{I(\alpha)}\bigr)=\eq*{\Set*{f\in{^\beta}2}{f\circ\Gamma\in X}}{I(\beta)}.
\]
\end{remark}

\begin{theorem}\label{theorem:tukeymax} Let $I$ be an index-invariant $\sigma$-ideal over $^\omega2$, containing all singletons. For every infinite cardinal $\kappa$ and every ultrafilter $U$ on $\B(I,\kappa)$, we have ${\bigl\langle\fin*{\kappa},\subseteq\bigr\rangle}\le_{\mathrm{T}}\langle U,\ge\rangle$.
\end{theorem}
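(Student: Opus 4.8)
The plan is to verify the criterion of Proposition \ref{proposition:criterion}: it suffices to produce a family $X\subseteq U$ with $\abs{X}=\kappa$ all of whose infinite subsets are unbounded in $\langle U,\ge\rangle$. For each $\xi<\kappa$ I would look at the clopen set $C_\xi=\Set*{f\in{^\kappa}2}{f(\xi)=1}$ and, using that $U$ is an ultrafilter, set $a_\xi=\eq*{C_\xi}{I(\kappa)}$ if this class lies in $U$ and $a_\xi=\eq*{{^\kappa}2\setminus C_\xi}{I(\kappa)}$ otherwise, so that $a_\xi\in U$ in either case. The candidate family is $X=\Set*{a_\xi}{\xi<\kappa}$; note each $a_\xi$ is represented by a clopen set $A_\xi=\Set*{f\in{^\kappa}2}{f(\xi)=\epsilon_\xi}$ depending on a single coordinate.

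The core of the proof is a computation showing that any infinitely many of the $a_\xi$ have meet $\bbot$. Fix distinct $\xi_0,\xi_1,\dots<\kappa$. Since the quotient map $\mathcal{B}({^\kappa}2)\to\B(I,\kappa)$ is a $\sigma$-homomorphism, $\bigwedge_{i<\omega}a_{\xi_i}=\eq*{\bigcap_{i<\omega}A_{\xi_i}}{I(\kappa)}$. Defining the injection $\Delta\colon\omega\to\kappa$ by $\Delta(i)=\xi_i$, every $f\in\bigcap_{i<\omega}A_{\xi_i}$ satisfies $f\circ\Delta=\Seq*{\epsilon_{\xi_i}}{i<\omega}$, a single point of ${^\omega}2$. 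As $I$ contains all singletons, this witnesses $\bigcap_{i<\omega}A_{\xi_i}\in I(\kappa)$, so $\bigwedge_{i<\omega}a_{\xi_i}=\bbot$. Hence no $u\in U$ can satisfy $u\le a_{\xi_i}$ for all $i$ (such a $u$ would be below $\bbot$), which is precisely to say that $\Set*{a_{\xi_i}}{i<\omega}$ is unbounded in $\langle U,\ge\rangle$. Since every infinite subset of $X$ contains an $\omega$-sequence of elements indexed by distinct coordinates, every infinite subset of $X$ is unbounded, as required. This step is where both hypotheses on $I$ are used — index invariance (to form $I(\kappa)$ via $\Delta$) and the presence of singletons — and I expect it to go through smoothly.

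The genuinely fiddly point, and the step I would treat most carefully, is verifying $\abs{X}=\kappa$, i.e.\ that $\xi\mapsto a_\xi$ is injective. One may assume $I$ is proper, since otherwise $\B(I,\kappa)$ is trivial and supports no ultrafilter. If $a_\xi=a_\eta$ for some $\xi\ne\eta$, then the symmetric difference $A_\xi\mathbin{\triangle}A_\eta$ — a set of the form $\Set*{f}{f(\xi)=f(\eta)}$ or $\Set*{f}{f(\xi)\ne f(\eta)}$ — lies in $I(\kappa)$; pulling back along an injection hitting both $\xi$ and $\eta$ then forces one of $\Set*{g\in{^\omega}2}{g(0)=g(1)}$ or $\Set*{g\in{^\omega}2}{g(0)\ne g(1)}$ into $I$. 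Taking the countable union over all coordinate pairs and invoking index invariance, the first alternative would place every non-injective $g$ in $I$ and the second every non-constant $g$; either way, together with the singletons already in $I$, this yields ${^\omega}2\in I$, contradicting properness. Thus the $a_\xi$ are pairwise distinct, $\abs{X}=\kappa$, and Proposition \ref{proposition:criterion} delivers the conclusion.
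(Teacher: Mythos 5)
Your proof is correct and takes essentially the same route as the paper's: the same family of one-coordinate clopen sets with the side of each coordinate selected by the ultrafilter, the same key computation that countably many of them intersect in a pullback of a singleton and hence lie in $I(\kappa)$, and the same appeal to Proposition \ref{proposition:criterion}. The one difference is that you verify $\abs{X}=\kappa$ explicitly (a point the paper leaves implicit); your argument for this is correct, though it also follows directly from the meet computation, since if fewer than $\kappa$ classes occurred then some class would repeat infinitely often, giving an infinite subfamily whose meet is that very class, an element of $U$, contradicting that the meet is $\bbot$.
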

\begin{proof} Let $U$ be an ultrafilter on $\B(I,\kappa)$. We define a function $x\colon\kappa\times 2\to\Clop(^\kappa2)$ as follows: for every $\alpha<\kappa$,
\[
x(\alpha,0)=\Set{f\in{^\kappa}2}{f(\alpha)=0}\quad\text{and}\quad x(\alpha,1)=\Set{f\in{^\kappa}2}{f(\alpha)=1}.
\]
As $U$ is an ultrafilter, there exists a function $g\colon\kappa\to 2$ such that for all $\alpha<\kappa$, $\eq*{x\bigl(\alpha,g(\alpha)\bigr)}{I(\kappa)}\in U$. Let
\[
X=\Set*{\eq*{x\bigl(\alpha,g(\alpha)\bigr)}{I(\kappa)}}{\alpha<\kappa};
\]
it is sufficient to show that whenever $Y\subseteq X$ is infinite we have $\bigwedge Y\notin U$, then conclude using Proposition \ref{proposition:criterion}.

Let $\Delta\colon\omega\to\kappa$ be an arbitrary injective function. Since we are assuming $I$ contains all singletons, we have $\{g\circ\Delta\}\in I$. Therefore, by definition,
\[
\bigcap_{n<\omega}x\bigl(\Delta(n),g(\Delta(n))\bigr)=\Set*{f\in{^\kappa}2}{f\circ\Delta = g\circ\Delta}=\Set*{f\in{^\kappa}2}{f\circ\Delta\in\{g\circ\Delta\}}\in I(\kappa).
\]
Now using the fact that $I(\kappa)$ is a $\sigma$-ideal,
\[
\bigwedge_{n<\omega}\eq*{x\bigl(\Delta(n),g(\Delta(n))\bigr)}{I(\kappa)}=\eq*{\bigcap_{n<\omega}x\bigl(\Delta(n),g(\Delta(n))\bigr)}{I(\kappa)}=\bbot\notin U,
\]
as desired.
\end{proof}

Brown and Dobrinen \cite[Question 4.2]{bd:tukey} asked: if $\B$ is an infinite Boolean algebra such that all ultrafilters on $\B$ are Tukey maximal, is $\B$ necessarily a free algebra? Theorem \ref{theorem:tukeymax} provides a negative answer to this question. Indeed it implies that, if $\kappa$ is a cardinal satisfying $\kappa^{\aleph_0}=\kappa$ and $I$ is, for instance, the meagre (or null) ideal, then every ultrafilter on $\B(I,\kappa)$ is Tukey maximal. On the other hand, $\B(I,\kappa)$ is a $\sigma$-algebra, hence not free by the Gaifman-Hales theorem \cite{gaifman:free,hales:free}.

\section{The ultrafilter number}\label{section:tre}

This section is dedicated to the study of a cardinal invariant, the ultrafilter number, which is closely related to Tukey reducibility. We shall carry out this study for Boolean algebras of the form $\B(I,\kappa)$, focusing in particular on the situation where $\kappa$ is either $\omega$ or $\omega_1$, so that the ultrafilter numbers $\uu(\B(I,\omega))$ and $\uu(\B(I,\omega_1))$ are indeed two cardinal invariants of the continuum. We begin with standard definitions.

Let $\B$ be an infinite Boolean algebra. We let
\[
\pi(\B)=\min\Set{\abs{\D}}{\D\text{ is a dense subalgebra of }\B}
\]
be the \emph{$\pi$-weight} of $\B$. Furthermore, we define the \emph{ultrafilter number} of $\B$ as
\[
\uu(\B)=\min\Set{\cof(\langle U,\ge\rangle)}{U\subset\B\text{ is a non-principal ultrafilter}};
\]
for simplicity of notation, let also $\uu=\uu(\mathcal{P}(\omega)/\mathrm{fin})$. Finally, as usual $\dd$ denotes the \emph{dominating number}.

The main motivation to consider the ultrafilter number in this paper is given by the following observation, which follows directly from Proposition \ref{proposition:criterion}.

\begin{remark} Let $\B$ be an infinite Boolean algebra. If $\uu(\B)<\abs{\B}$, then there exists a non-principal ultrafilter on $\B$ which is not Tukey maximal.
\end{remark}

Let $\N$ be the ideal over $^\omega2$ consisting of sets which are null with respect to the standard product measure $\mu$. The cardinal $\non(\N)$, which is the least size of a set which is not null, will play a role in our discussion. We also assume some familiarity with the meagre ideal $\M$, consisting of sets which are countable unions of nowhere dense sets.

As $\M$ and $\N$ are both index-invariant $\sigma$-ideals, containing all singletons, we shall consider the corresponding quotient algebras as in \eqref{eq:quot}. In accord with usual notation, for an infinite ordinal $\alpha$ let $\C_\alpha=\B(\M,\alpha)$ be the \emph{Cohen algebra} and $\B_\alpha=\B(\N,\alpha)$ be the \emph{Random algebra}. 

\begin{proposition}\label{proposition:dense} Let $\alpha$ be an infinite ordinal; then both $\C_\alpha$ and $\B_\alpha$ are c.c.c.\ complete Boolean algebras. Furthermore,
\begin{enumerate}
\item\label{denseuno} for every $B\in\mathcal{B}(^\alpha2)\setminus\M(\alpha)$ there exists a non-empty $C\in\Clop(^\alpha2)$ such that $C\setminus B\in\M(\alpha)$;
\item\label{densedue} for every $B\in\mathcal{B}(^\alpha2)\setminus\N(\alpha)$ and every $\varepsilon>0$ there exists a non-empty $C\in\Clop(^\alpha2)$ such that $\mu(B\cap C)\ge(1-\varepsilon)\mu(C)$.
\end{enumerate}
\end{proposition}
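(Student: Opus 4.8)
The plan is to reduce all four assertions to the classical situation over the Polish space ${^\omega}2$, exploiting the index-invariant structure so that properties \eqref{denseuno} and \eqref{densedue} become the Baire-property and Lebesgue-density localizations in disguise. The preliminary fact I would record is that every $B\in\mathcal{B}(^\alpha2)$ has \emph{countable support}: since each clopen set depends on only finitely many coordinates and the sets factoring through a fixed countable set of coordinates form a $\sigma$-algebra, there are a countably infinite $s\subseteq\alpha$, an injection $\Delta_0\colon\omega\to\alpha$ enumerating $s$, and a Borel $B'\subseteq{^\omega}2$ with $B=\Set*{f\in{^\alpha}2}{f\circ\Delta_0\in B'}$. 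Since $I(\omega)=I$ (because $I$ is index invariant), Lemma \ref{lemma:kunen} applied to $\Delta_0$ gives the transfer equivalence $B\in I(\alpha)\iff B'\in I$, valid for $I\in\{\M,\N\}$; combined with $\mu(B)=\mu(B')$, this turns ``$B\notin\M(\alpha)$'' into ``$B'$ non-meagre'' and ``$B\notin\N(\alpha)$'' into ``$\mu(B')>0$''.

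With this in hand, \eqref{denseuno} is just Baire-property localization on ${^\omega}2$. Given $B'$ non-meagre, I would use that $B'$ has the Baire property to find an open $G$, necessarily non-empty as $B'$ is non-meagre, with $B'\mathbin{\triangle}G$ meagre; a non-empty basic clopen $C'\subseteq G$ then satisfies $C'\setminus B'\subseteq G\mathbin{\triangle}B'$, so $C'\setminus B'\in\M$. Pulling $C'$ back along $\Delta_0$ yields a non-empty clopen cylinder $C=\Set*{f\in{^\alpha}2}{f\circ\Delta_0\in C'}$, and the transfer equivalence converts $C'\setminus B'\in\M$ into $C\setminus B\in\M(\alpha)$, as required (pullback commuting with $\setminus$).

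For \eqref{densedue} I would establish the corresponding measure-density statement on ${^\omega}2$. The input is that finite unions of basic cylinders are dense in $\mathcal{B}(^\omega2)$ for the pseudometric $(A,A')\mapsto\mu(A\mathbin{\triangle}A')$, the approximable sets being an algebra and a monotone class containing the clopen algebra. Approximating $B'$ within $\eta\le\varepsilon\mu(B')/(1+\varepsilon)$ by a disjoint union of non-empty cylinders $C'_1,\dots,C'_k$, a short averaging argument forces some $C'_i$ to satisfy $\mu(B'\cap C'_i)\ge(1-\varepsilon)\mu(C'_i)$; its pullback is the desired $C$, since the cylinder correspondence preserves $\mu$. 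As for the structural claims, $\C_\alpha$ and $\B_\alpha$ are $\sigma$-complete, being quotients of a $\sigma$-algebra by the $\sigma$-ideal $I(\alpha)$ (Lemma \ref{lemma:kunen}). The random algebra is c.c.c.\ because an antichain corresponds to positive-measure Borel sets that are pairwise $\mu$-almost-disjoint, of which a probability space admits only countably many. For the Cohen algebra I would use \eqref{denseuno} to refine any antichain to one consisting of non-empty basic cylinders, necessarily pairwise genuinely disjoint (a non-empty cylinder is never in $\M(\alpha)$), and then apply the $\Delta$-system lemma to their finite supports: an uncountable such family would contain uncountably many cylinders sharing a fixed finite root on which they pairwise disagree, which is absurd. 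Finally, a $\sigma$-complete c.c.c.\ Boolean algebra is complete.

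The step I expect to require the most care is the passage to countable support and the transfer of $I(\alpha)$-membership through Lemma \ref{lemma:kunen} — getting the choice of $\Delta_0$, the identification ${^s}2\cong{^\omega}2$, and the commutation of pullback with the operations $\setminus$ and $\cap$ exactly right is precisely what makes the classical theorems applicable. Everything downstream, namely the Baire-property argument, the averaging argument, and the $\Delta$-system count, should then be routine.
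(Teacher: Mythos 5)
Your proof is correct. For comparison: the paper gives no proof at all — it declares the proposition well known, citing Sikorski's Theorem 35.1 for \eqref{denseuno} and Lebesgue's density theorem for \eqref{densedue}. Your countable-support reduction, with Lemma \ref{lemma:kunen} (together with the observation $I(\omega)=I$) transferring $I(\alpha)$-membership along the pullback, and the pullback preserving $\mu$ and the Boolean operations, is exactly what makes the classical one-dimensional facts applicable, and the downstream steps all check out: the Baire-property localization for \eqref{denseuno}; the approximation-by-cylinders-plus-averaging argument for \eqref{densedue}, where your threshold $\eta\le\varepsilon\mu(B')/(1+\varepsilon)$ does close the estimate (otherwise $\eta\ge\mu(E\setminus B')>\varepsilon\mu(E)\ge\varepsilon(\mu(B')-\eta)$, a contradiction); the two c.c.c.\ arguments (pairwise almost-disjoint positive-measure sets for $\B_\alpha$; reduction to genuinely disjoint non-empty cylinders plus the $\Delta$-system lemma for $\C_\alpha$, where genuine disjointness follows because a non-empty clopen intersection would be non-meagre by the same transfer); and the Smith--Tarski theorem that a $\sigma$-complete c.c.c.\ algebra is complete. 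The only substantive divergence from the route the paper indicates is in \eqref{densedue}: where the paper points to Lebesgue's density theorem, you instead use density of the clopen algebra in the measure algebra together with an averaging argument. That trade is a good one — it is more elementary, and a pointwise density theorem on ${^\alpha}2$ would in any case require the same reduction to countably many coordinates that you perform, so your version is arguably the cleaner way to make the citation honest.
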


We omit the proof of this well-known fact, but we remark that \eqref{denseuno} can be found in Sikorski \cite[Theorem 35.1]{sikorski:boolean}, while \eqref{densedue} follows from Lebesgue's density theorem.

We are ready to undertake the study of $\uu(\C_\omega)$, $\uu(\C_{\omega_1})$, $\uu(\B_\omega)$ and $\uu(\B_{\omega_1})$, beginning with a theorem which summarizes the relations between those four cardinals and other cardinal invariants of the continuum.

\begin{theorem}\label{theorem:hasse} The inequalities in the following Hasse diagram hold:
\[
\xymatrix{
& {2^{\aleph_0}} & \\
{\uu(\C_{\omega_1})}\ar@{-}[ur] & & {\uu(\B_{\omega_1})}\ar@{-}[ul] \\
{\uu(\C_\omega)}\ar@{-}[u] & & {\uu(\B_\omega)}\ar@{-}[u] \\
& & {\cof(\N)}\ar@{-}[u] \\
{\uu}\ar@{-}[uu]\ar@{-}[uurr] & {\dd}\ar@{-}[uul]\ar@{-}[ur] & {\non(\N)}\ar@{-}[u] \\
& {\aleph_1}\ar@{-}[ul]\ar@{-}[u]\ar@{-}[ur]}
\]
\end{theorem}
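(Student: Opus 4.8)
The plan is to verify the edges of the diagram separately, grouped by the technique involved. The five lowest edges, namely $\aleph_1\le\dd$, $\aleph_1\le\non(\N)$, $\aleph_1\le\uu$, $\dd\le\cof(\N)$ and $\non(\N)\le\cof(\N)$, are the classical inequalities of Cicho\'n's diagram together with the uncountability of the ultrafilter number, so I would simply cite the standard references. The two edges at the top, $\uu(\C_{\omega_1})\le2^{\aleph_0}$ and $\uu(\B_{\omega_1})\le2^{\aleph_0}$, are immediate once one notes that $\abs{\C_{\omega_1}}=\abs{\B_{\omega_1}}=\aleph_1^{\aleph_0}=2^{\aleph_0}$: any ultrafilter $V$ satisfies $\cof(\langle V,\ge\rangle)\le\abs{V}\le2^{\aleph_0}$, and non-principal ultrafilters exist because these algebras are atomless. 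This leaves the genuinely informative edges, which I would handle in three stages.

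First, the edges $\uu\le\uu(\C_\omega)$ and $\uu\le\uu(\B_\omega)$. The key point is that no ultrafilter on $\C_\omega$ or $\B_\omega$ is $\sigma$-complete: working in ${}^\omega2$, the clopen partitions $\Set{[s]}{s\in{}^n2}$ are finite, so an ultrafilter $V$ selects from each a unique member, yielding a point $g\in{}^\omega2$ with $[g\upharpoonright n]\in V$ for all $n$; since $\{g\}$ lies in the ideal we have $\bigwedge_n[g\upharpoonright n]=\bbot$, so $\sigma$-completeness would force $\bbot\in V$. Hence Corollary~\ref{corollary:rkomega} applies and produces a non-principal $U$ over $\omega$ with $\langle U,\supseteq\rangle\le_{\mathrm T}\langle V,\ge\rangle$; Proposition~\ref{proposition:schmidt} then gives $\uu\le\cof(\langle U,\supseteq\rangle)\le\cof(\langle V,\ge\rangle)$, and minimising over $V$ yields the two inequalities.

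Second, the monotonicity edges $\uu(\C_\omega)\le\uu(\C_{\omega_1})$ and $\uu(\B_\omega)\le\uu(\B_{\omega_1})$. By Remark~\ref{remark:induce} the inclusion $\omega\hookrightarrow\omega_1$ induces a $\sigma$-complete embedding $\iota_*\colon\B(I,\omega)\to\B(I,\omega_1)$, which is in fact a complete homomorphism because both algebras are c.c.c. Given any ultrafilter $V$ on $\B(I,\omega_1)$, the preimage $\iota_*^{-1}[V]$ is an ultrafilter on $\B(I,\omega)$ witnessing $\iota_*^{-1}[V]\le_{\mathrm{RK}}V$ (take $v=\btop$ and $f=\iota_*$), so Proposition~\ref{proposition:rk} together with Proposition~\ref{proposition:schmidt} gives $\cof(\iota_*^{-1}[V])\le\cof(V)$; minimising over $V$ yields the claim for both $I=\M$ and $I=\N$.

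The remaining two edges, $\dd\le\uu(\C_\omega)$ and $\cof(\N)\le\uu(\B_\omega)$, form the combinatorial heart and are where I expect the real difficulty. For the Cohen case I would fix an ultrafilter $V$ on $\C_\omega$, the point $g$ as above, and the maximal antichain given by the classes of the clopen sets $\Set{f\in{}^\omega2}{f\upharpoonright n=g\upharpoonright n,\ f(n)\ne g(n)}$; a short computation shows that, for each $D\in V$, the set of $n$ for which $D$ meets the $n$-th piece belongs to the ultrafilter induced on $\omega$, and using Proposition~\ref{proposition:dense}\eqref{denseuno} one represents $D$ by an open set whose trace on each piece has a well-defined ``depth''. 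The plan is to assign to each element of a base $\mathcal{D}$ of $V$ the resulting depth function and to prove that this family is dominating, so that $\dd\le\abs{\mathcal{D}}=\cof(\langle V,\ge\rangle)$. For the random case I would run the parallel construction with Lebesgue density (Proposition~\ref{proposition:dense}\eqref{densedue}) in place of Baire category, extracting from a base of $V$ a family of slaloms and showing it is cofinal in $\langle\N,\subseteq\rangle$. The main obstacle in both cases is exactly the verification that the extracted family witnesses $\dd$ (respectively $\cof(\N)$): this step is not formal and requires a genuine category argument for $\C_\omega$ and a measure-theoretic (Fubini/Borel--Cantelli) argument for $\B_\omega$, the subtlety being that the ultrafilter only ``sees'' positive elements whereas $\dd$ and $\cof(\N)$ are governed by the small sets of the respective ideals.
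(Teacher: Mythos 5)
Your handling of the routine edges is correct and coincides with the paper's own proof: the top edges follow from $\abs{\C_{\omega_1}}=\abs{\B_{\omega_1}}=2^{\aleph_0}$; the monotonicity edges follow from Remark \ref{remark:induce}, the c.c.c.\ (upgrading the $\sigma$-complete embedding to a complete one), and Propositions \ref{proposition:rk} and \ref{proposition:schmidt}; the edges $\uu\le\uu(\C_\omega)$ and $\uu\le\uu(\B_\omega)$ follow from non-$\sigma$-completeness via Corollary \ref{corollary:rkomega}; and the bottom edges are classical. Your argument that no ultrafilter on $\C_\omega$ or $\B_\omega$ is $\sigma$-complete is also fine.

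The genuine gap is exactly where you locate ``the main obstacle'': the edges $\dd\le\uu(\C_\omega)$ and $\cof(\N)\le\uu(\B_\omega)$ are presented as plans, and the step you defer --- verifying that the extracted family is dominating, respectively cofinal in $\N$ --- \emph{is} the entire content of these two inequalities, so as written nothing proves them. For the Cohen case your outline can be completed, and the completion is the paper's argument run in contrapositive: given $X\subseteq U$ with $\abs{X}<\dd$ and a maximal antichain $\Set{a_i}{i<\omega}$ disjoint from $U$, one sets $N_s=\Set{f\in{^\omega2}}{s\subset f}$, defines for each $x\in X$ the depth function $f_x(i)=\min\Set{n<\omega}{\text{there is }s\in{^n2}\text{ with }\eq{N_s}{\M}\le a_{j_i}\wedge x}$, where $j_i$ is the least $j\ge i$ with $a_j\wedge x>\bbot$ (this wrinkle matters, since $x$ need not meet every $a_i$), picks an increasing $g$ with $g\nleq f_x$ for all $x\in X$, and builds the test element $u=\bigvee_{i<\omega}\bigl(a_i\wedge\eq{\Set{f\in{^\omega2}}{f(g(i))=0}}{\M}\bigr)$. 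The crucial points are that one of $u,\neg u$ lies in $U$ (the two cases being symmetric), and that $x\le u$ is then impossible: choosing $i$ with $f_x(i)<g(i)$, the witnessing clopen set $N_s$ is determined by coordinates below $g(j_i)$, so it cannot be contained in $\Set{f\in{^\omega2}}{f(g(j_i))=0}$. Without this construction of a test element from an arbitrary escaping function, and the accompanying case split, the claim that the depth functions of a base dominate is an assertion, not a proof.

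For the random case, the paper does not run a parallel measure-theoretic construction at all, and you need not either: it invokes Burke's theorem on weakly dense subsets of the measure algebra. The only observation required is that a cofinal $X\subseteq U$ is weakly dense in Burke's sense: for every $b\in\B_\omega$, either $b\in U$ and some $x\in X$ satisfies $x\le b$, or $\neg b\in U$ and some $x\in X$ satisfies $x\wedge b=\bbot$; Burke's result then produces a cofinal subset of $\N$ of cardinality $\abs{X}$, giving $\cof(\N)\le\abs{X}$. Your proposed extraction of slaloms with a Fubini/Borel--Cantelli verification would essentially amount to reproving Burke's theorem; that may well be feasible, but in the proposal it remains a research plan rather than a proof, whereas the citation closes the edge in two lines.
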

\begin{proof} The upper bound $\uu(\C_{\omega_1})+\uu(\B_{\omega_1})\le 2^{\aleph_0}$ follows directly from the observation that the Boolean algebras $\C_{\omega_1}$ and $\B_{\omega_1}$ have both cardinality $2^{\aleph_0}$.

As for $\uu(\C_\omega)\le\uu(\C_{\omega_1})$, Remark \ref{remark:induce} gives a $\sigma$-complete embedding $f\colon\C_\omega\to\C_{\omega_1}$ which, due to the fact that $\C_\omega$ is c.c.c., is in fact a complete embedding. Consequently, for every ultrafilter $U$ on $\C_{\omega_1}$ we have $\cof\bigl(f^{-1}[U]\bigr)\le\cof(U)$ by Proposition \ref{proposition:schmidt} and Proposition \ref{proposition:rk}. The inequality $\uu(\B_\omega)\le\uu(\B_{\omega_1})$ is proved analogously.

To prove $\uu\le\uu(\C_\omega)$, it is sufficient to note that no ultrafilter on $\C_\omega$ is $\sigma$-complete, then apply Proposition \ref{proposition:schmidt} and Corollary \ref{corollary:rkomega}. Similarly for $\uu\le\uu(\B_\omega)$.

We now prove the inequality $\dd\le\uu(\C_\omega)$. Let $U$ be an ultrafilter on $\C_\omega$; since $U$ is not $\sigma$-complete, there exists a maximal antichain $\Set{a_i}{i<\omega}$ such that for all $i<\omega$, $a_i\notin U$. Let $X\subset U$ be such that $\abs{X}<\dd$, we aim to prove that $X$ is not cofinal in $\langle U,\ge\rangle$. Whenever $s\in{^{<\omega}2}$, we let
\[
N_s=\Set{f\in{^\omega2}}{s\subset f}
\]
be the corresponding element of $\Clop(^\omega2)$. By point \eqref{denseuno} of Proposition \ref{proposition:dense}, for every $b\in\C_\omega\setminus\{\bbot\}$ there exists some $s\in{^{<\omega}2}$ such that $\eq{N_s}{\M}\le b$. Using this fact, for each $x\in X$ we may define a function $f_x\colon\omega\to\omega$ as follows. Let $i<\omega$; as every element of $U$ meets infinitely many members of $\Set{a_i}{i<\omega}$, let $j_i$ be the least $j\ge i$ such that $a_j\wedge x>\bbot$. Then we define
\[
f_x(i)=\min\Set{n<\omega}{\text{there exists }s\in{^n2}\text{ such that }\eq{N_s}{\M}\le a_{j_i}\wedge x}.
\]
Since $\abs{X}<\dd$, there exists an increasing function $g\colon\omega\to\omega$ such that for all $x\in X$ we have $g\nleq f_x$. Now define
\[
u=\bigvee_{i<\omega}\bigl(a_i\wedge\eq{\Set{f\in{^\omega2}}{f(g(i))=0}}{\M}\bigr)
\]
and assume $u\in U$; the other case when $\neg u\in U$ is completely analogous. Towards a contradiction suppose now $X$ is cofinal in $\langle U,\ge\rangle$, this implies the existence of some $x\in X$ such that $x\le u$. Since $g$ is not dominated by $f_x$, we can find some $i<\omega$ such that $f_x(i)<g(i)$; then, by definition of $f_x$ there exists some $s\colon f_x(i)\to 2$ such that $\eq{N_s}{\M}\le a_{j_i}\wedge x$. Putting everything together, we deduce
\[
\eq{N_s}{\M}\le a_{j_i}\wedge x\le a_{j_i}\wedge u\le\eq{\Set{f\in{^\omega2}}{f(g(j_i))=0}}{\M},
\]
which is a contradiction because $f_x(i)<g(j_i)$. This shows that $X$ cannot be cofinal in the ultrafilter and completes the proof of $\dd\le\uu(\C_\omega)$.

The fact that $\cof(\N)\le\uu(\B_\omega)$ is a consequence of the work of Burke \cite{burke:dense}. More precisely, in Case 1 of \cite[Theorem 1]{burke:dense}, he showed the following: if $X\subseteq\B_\omega\setminus\{\bbot\}$ has the property that for all $b\in\B_\omega$ there exists $x\in X$ such that either $x\le b$ or $x\wedge b=\bbot$, then there exists a cofinal subset $Y\subseteq\N$ such that $\abs{X}=\abs{Y}$. Therefore, the inequality $\cof(\N)\le\uu(\B_\omega)$ follows from the observation that, whenever $U$ is an ultrafilter on $\B_\omega$ and $X\subseteq U$ is cofinal, clearly $X$ satisfies the assumption of Burke's result.

The other inequalities in the diagram are well known.
\end{proof}

Regarding the inequalities in the above theorem, we still do not know whether it is true that $\uu(\C_\omega)=\uu(\C_{\omega_1})$ and $\uu(\B_\omega)=\uu(\B_{\omega_1})$ in $\mathrm{ZFC}$. We now present two consistency results, for which we shall assume some familiarity with finite-support iteration of c.c.c.\ forcing notions.

\begin{definition} A notion of forcing $\mathbb{P}$ is \emph{$\sigma$-centred} if $\mathbb{P}=\bigcup_{n<\omega}P_n$ where, for all $n<\omega$, every finite subset of $P_n$ has a lower bound in $\mathbb{P}$.
\end{definition}

It is well known that a finite-support iteration of $\sigma$-centred forcing notions is also $\sigma$-centred; see Tall \cite{tall:centred} for further details on this topic.

\begin{theorem}\label{theorem:c1} It is consistent that $\uu(\C_{\omega_1})<\non(\N)$.
\end{theorem}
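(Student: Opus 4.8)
The plan is to build the model by a finite-support iteration of $\sigma$-centred forcing notions, aiming at the configuration $\dd=\aleph_1$, $\non(\N)=2^{\aleph_0}=\aleph_2$, together with an ultrafilter $U$ on $\C_{\omega_1}$ admitting a cofinal subset of size $\aleph_1$. The last item gives $\cof(\langle U,\ge\rangle)\le\aleph_1$, hence $\uu(\C_{\omega_1})\le\aleph_1$, and combined with the inequality $\dd\le\uu(\C_{\omega_1})$ from Theorem \ref{theorem:hasse} this yields $\uu(\C_{\omega_1})=\aleph_1<\aleph_2=\non(\N)$. The shape of the iteration is essentially dictated by Theorem \ref{theorem:hasse}: since $\dd\le\uu(\C_{\omega_1})$ and $\non(\N)\le 2^{\aleph_0}$, any witness to the theorem must satisfy $\dd<2^{\aleph_0}$. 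A finite-support iteration whose length has cofinality $\omega_2$ adds unbounded Cohen reals cofinally and therefore forces $\dd=2^{\aleph_0}$; so I would instead iterate along a length $\delta$ with $\abs{\delta}=\aleph_2$ but $\cof(\delta)=\omega_1$, adding a dominating real cofinally often. The resulting cofinal family of $\aleph_1$ dominating reals then witnesses $\dd=\aleph_1$ in spite of the Cohen reals appearing at the finite-support limits.

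The first task is to isolate the iterand, which must do two things at once: add a dominating real (to keep $\dd$ small) and drive $\non(\N)$ up, that is, make the reals of the intermediate model null. The natural candidate is a $\sigma$-centred, Hechler-type step whose generic is forced to dominate while lying inside a fixed ground-model null $G_\delta$ set, exploiting that a sufficiently fast-growing tail is measure-small under a suitable product measure on $\omega^\omega$. Confining the dominating reals to a single fixed null set is the point: the entire cofinal dominating family then stays null, so it does not itself drag $\non(\N)$ back down to $\aleph_1$. Since finite-support iterations of $\sigma$-centred forcings are again $\sigma$-centred, the whole forcing is $\sigma$-centred, which is what I would use to drive the preservation arguments below.

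The ultrafilter I would construct along the iteration, using a bookkeeping of length $\omega_2$ that enumerates (names for) the elements of the final $\C_{\omega_1}$, while the distinguished cofinal family $X$ is read off from the $\aleph_1$ dominating reals produced along the cofinal $\omega_1$-sequence. Concretely, at each bookkept element $b$ I would use point \eqref{denseuno} of Proposition \ref{proposition:dense} to find a non-empty clopen set below $b$ or below $\neg b$, and arrange (via the genericity of the dominating real being added) that a member of $X$ is caught inside the chosen side; the filter generated by $X$ is then forced to decide $b$. Thus $X$ has size $\aleph_1$ and is designed to generate an ultrafilter $U$ in which $X$ is cofinal, so that $\uu(\C_{\omega_1})\le\aleph_1$. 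Here it is essential that the forcing supplies a dominating family with enough genericity: no \emph{definable} passage from a dominating family to such an $X$ can exist in $\mathrm{ZFC}$, since the edge $\dd\le\uu(\C_{\omega_1})$ of Theorem \ref{theorem:hasse} need not be an equality.

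I expect two genuinely hard points. The first is the measure-theoretic balancing act: one must verify that $\non(\N)=\aleph_2$ really holds, i.e.\ that \emph{every} set of reals of size $\aleph_1$ in the final model is null, even though the iteration has cofinality $\omega_1$ and hence admits cofinal $\aleph_1$-sets that are covered at no single intermediate stage. This is where a preservation theorem for finite-support iterations of $\sigma$-centred forcings is needed, guaranteeing that ``every $\aleph_1$-set is null'' survives the limit of cofinality $\omega_1$; the confinement of the dominating reals to a fixed null set is arranged precisely to feed such an argument. The second, and I expect the more delicate, is to show that the $\aleph_1$-sized family $X$ is genuinely cofinal in $\langle U,\ge\rangle$: since the final $\C_{\omega_1}$ has size $2^{\aleph_0}=\aleph_2$ while $X$ has size only $\aleph_1$, the elements cannot be decided one at a time, and one must argue—through the bookkeeping, the density property of Proposition \ref{proposition:dense}, and the domination provided by the cofinal family—that every Borel-modulo-meagre element of the final algebra is caught below some member of $X$. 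Establishing this cofinality, which is exactly the reversal of the obstruction used to prove $\dd\le\uu(\C_\omega)$ in Theorem \ref{theorem:hasse}, is the crux of the whole construction.
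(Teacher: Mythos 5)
There are genuine gaps at both of the points you defer, and in each case the mechanism you propose would not do the job. First, the cofinal family. If $X\subseteq U$ is cofinal with $\abs{X}=\aleph_1$, then for each of the $\aleph_2$-many elements $b\in\C_{\omega_1}$ some member of $X$ must lie below $b$ or below $\neg b$, so by pigeonhole a \emph{single} member of $X$ must lie below the chosen sides of $\aleph_2$-many bookkept elements; moreover $X$ must be directed, so each new member must essentially get below all earlier ones. A Hechler-type generic, even one confined to a null set, has no mechanism for either: the genericity needed is genericity \emph{relative to the filter built so far}, which forces the filter to appear in the conditions of the iterand. That is exactly what the paper's Mathias-type forcing $\Mat(U_\alpha,\C_{\omega+\alpha})$ does: conditions are pairs $\langle s,S\rangle$ with $S\in U_\alpha$, the generic $g_\alpha$ satisfies that $g_\alpha\wedge\neg S$ meets only finitely many members of the fixed antichain for \emph{every} $S\in U_\alpha$, and one extends to a full ultrafilter on the current algebra at each step, so that $\{g_\alpha\wedge\bigvee_{i\ge n}a_i : \alpha<\omega_1,\ n<\omega\}$ is automatically directed and cofinal. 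Your own observation that no definable passage from a dominating family to such an $X$ exists points at the inversion in your plan: in the paper the dominating family of size $\aleph_1$ is a \emph{by-product} of ultrafilter diagonalization (via $\dd\le\uu(\C_\omega)$), not an input from which the ultrafilter can be recovered.

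Second, the $\non(\N)$ side. Your claim that the whole iteration is $\sigma$-centred is false in your setting: a finite-support iteration of $\sigma$-centred forcings is $\sigma$-centred only when its length is at most the continuum of the ground model (a Hewitt--Marczewski--Pondiczery phenomenon; already the forcing adding $\aleph_2$ Cohen reals fails to be $\sigma$-centred when $2^{\aleph_0}=\aleph_1$), whereas you want length of cardinality $\aleph_2$ over a model whose continuum the iteration itself is still pushing up. This matters because the preservation theorem you invoke is not available, and the claimed $\sigma$-centredness is what you say would drive it. Moreover, the natural proof attempt fails: for a cofinal $\aleph_1$-set $\{x_\xi : \xi<\omega_1\}$ one needs a \emph{single} null set, coded at some stage, covering the whole tail; trapping each $x_\xi$ separately in a null set coded at stage $\gamma$ (which is what ``$\sigma$-centred forcings add no random reals'' yields) is insufficient, since in your final model $\add(\N)\le\dd=\aleph_1$, so a union of $\aleph_1$ null sets need not be null. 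The paper's proof of Theorem \ref{theorem:c1} resolves precisely this by assuming Martin's Axiom in the ground model and keeping the iteration short: the length-$\omega_1$ iteration of Lemma \ref{lemma:main1} is then genuinely $\sigma$-centred, each of the $\aleph_1$-many names for reals is caught in a ground-model-coded null set, and $\add(\N)=2^{\aleph_0}$ (from MA) merges these into one null set which remains null in the extension; this is the content of the argument cited from \cite{bbbhhl:rearrangement}. Your confinement of the dominating reals to a fixed null set handles only that one particular $\aleph_1$-set and says nothing about arbitrary cofinal ones; without MA (or at least $\add(\N)>\aleph_1$) holding at some stage whose tail is $\sigma$-centred, your architecture has no substitute for this step.
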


Towards a proof, we first introduce a notion of forcing inspired by Mathias \cite[Definition 1.0]{mathias:happy} and discuss its basic features. Let $\Set{a_i}{i<\omega}$ be a maximal antichain in $\C_\omega$, which will be fixed throughout the construction. For $\alpha<\omega_1$, let also $\D_\alpha\subseteq\C_\alpha$ be a countable dense subalgebra. As previously observed, whenever $\alpha\le\beta$ we have naturally a complete embedding $\C_\alpha\longrightarrow\C_\beta$, hence without loss of generality we may represent $\C_{\omega_1}$ as an increasing union of subalgebras: $\C_{\omega_1}=\bigcup_{\alpha<\omega_1}\C_\alpha$.

For every infinite $\alpha<\omega_1$ and every ultrafilter $U$ on $\C_\alpha$ with the property that for all $i<\omega$, $a_i\notin U$, we define a notion of forcing $\Mat(U,\C_\alpha)$. Forcing conditions are pairs $p=\langle s_p,S_p\rangle$, where:
\begin{itemize}
\item $s_p\in\C_\alpha$;
\item for all $i<\omega$, $a_i\wedge s_p\in\D_\alpha$;
\item for all but finitely many $i<\omega$, $a_i\wedge s_p=\bbot$;
\item $S_p\in U$.
\end{itemize}
The ordering is defined as follows: for $p,q\in\Mat(U,\C_\alpha)$, by definition
\[
q\le_\alpha p\iff s_p\le s_q\text{ and }S_q\le S_p\text{ and }s_q\wedge\neg s_p\le S_p.
\]
It is now straightforward to check that $\langle\Mat(U,\C_\alpha),\le_\alpha\rangle$ is a partially ordered set.

\begin{lemma} The notion of forcing $\Mat(U,\C_\alpha)$ is $\sigma$-centred.
\end{lemma}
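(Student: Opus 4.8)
The plan is to partition the forcing conditions according to their stems $s_p$ and show that each piece is centred, the stem being a finite object drawn from a countable reservoir of data.

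First I would observe that there are only countably many admissible stems. Recall that $\Set{a_i}{i<\omega}$ is a maximal antichain in $\C_\omega$; since the embedding $\C_\omega\to\C_\alpha$ is complete, it remains a maximal antichain in $\C_\alpha$, so $\bigvee_{i<\omega}a_i=\btop$. Consequently every admissible stem $s$ satisfies $s=\bigvee_{i<\omega}(a_i\wedge s)$ and is therefore completely determined by the sequence $\Seq{a_i\wedge s}{i<\omega}$. By the definition of $\Mat(U,\C_\alpha)$, each term of this sequence lies in the countable set $\D_\alpha$, and all but finitely many terms equal $\bbot$. Hence a stem is coded by a finite partial function from $\omega$ into $\D_\alpha$, and there are only $\aleph_0$ of these.

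Next, for each admissible stem $s$ let $P_s=\Set{p\in\Mat(U,\C_\alpha)}{s_p=s}$. By the previous paragraph $\Mat(U,\C_\alpha)=\bigcup_s P_s$ is a countable union, so it remains to check that each $P_s$ is centred. Given finitely many conditions $p_1,\dots,p_n\in P_s$, I would define a common lower bound $q$ by setting $s_q=s$ and $S_q=S_{p_1}\wedge\dots\wedge S_{p_n}$. Since $U$ is an ultrafilter, $S_q\in U$, and so $q$ is a genuine condition. For each $k\le n$ the three clauses defining $q\le_\alpha p_k$ are immediate: $s_{p_k}\le s_q$ because $s_{p_k}=s=s_q$; $S_q\le S_{p_k}$ because $S_q$ is a meet of the $S_{p_j}$; and $s_q\wedge\neg s_{p_k}=\bbot\le S_{p_k}$ again because $s_q=s_{p_k}$. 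Thus $q$ is a lower bound of $p_1,\dots,p_n$ in $\Mat(U,\C_\alpha)$, which proves that $P_s$ is centred.

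The argument is essentially routine; the only point requiring genuine care is the counting of stems, and in particular the verification that $\Set{a_i}{i<\omega}$ remains a maximal antichain in $\C_\alpha$, so that a stem is indeed recoverable as the join of its finitely many nonzero components in the countable subalgebra $\D_\alpha$.
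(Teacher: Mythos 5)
Your proof is correct and follows essentially the same approach as the paper: partition the conditions by their stems $s_p$, note that the countability of $\D_\alpha$ yields only countably many admissible stems, and witness centredness of each piece by the condition $\langle s, S_{p_1}\wedge\dots\wedge S_{p_n}\rangle$. The one place you go beyond the paper is in justifying the stem count — observing that $\Set{a_i}{i<\omega}$ stays maximal in $\C_\alpha$ so that $s=\bigvee_{i<\omega}(a_i\wedge s)$ is recoverable from its components — a detail the paper leaves implicit.
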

\begin{proof} As the subalgebra $\D_\alpha$ is countable, there are only countably many $s\in\C_\alpha$ such that $\Set{a_i\wedge s}{i<\omega}\subseteq\D_\alpha$ and the set $\Set{i<\omega}{a_i\wedge s>\bbot}$ is finite. For every such $s$, let $M_s=\Set{p\in\Mat(U,\C_\alpha)}{s_p=s}$, so that $\Mat(U,\C_\alpha)$ is the countable union of the $M_s$. Furthermore, finitely many conditions $p_1,\dots,p_n\in M_s$ are compatible, as witnessed by the stronger condition $\langle s,S_{p_1}\wedge\dots\wedge S_{p_n}\rangle$.
\end{proof}

For an $\Mat(U,\C_\alpha)$-generic filter $G$, let
\[
g=\bigvee\Set{s_p}{p\in G};
\]
we then have the following:
\begin{enumerate}
\item\label{pseudo1} $\Set{i<\omega}{a_i\wedge g>\bbot}$ is infinite, but
\item\label{pseudo2} for each $S\in U$, $\Set{i<\omega}{a_i\wedge g\wedge\neg S>\bbot}$ is finite.
\end{enumerate}
The proof relies on standard density arguments. Indeed, to establish Property \eqref{pseudo1} it is sufficient to prove that for every $n<\omega$ the set
\[
D_n=\Set{p\in\Mat(U,\C_\alpha)}{\abs{\Set{i<\omega}{a_i\wedge s_p>\bbot}}\ge n}
\]
is dense in $\Mat(U,\C_\alpha)$. So let $p$ be an arbitrary condition; since $S_p\in U$, we have that $I=\Set{i<\omega}{a_i\wedge S_p>\bbot}$ is infinite, so it is possible to find a strictly increasing sequence $\Seq{i_k}{k<n}$ within $I$. By density of $\D_\alpha$, for every $k<n$ let $d_k\in\D_\alpha$ be such that $d_k\le a_{i_k}\wedge S_p$. Then it is clear that $\langle s_p\vee d_0\vee\dots\vee d_{n-1},S_p\rangle$ is a condition in $D_n$ which is stronger than $p$.

As for Property \eqref{pseudo2}, given $S\in U$ the set
\[
\Set{p\in\Mat(U,\C_\alpha)}{S_p\le S}
\]
is dense. Therefore, there exists $p\in G$ such that $S_p\le S$. For every $q\in G$, there exists a condition $r$ which is stronger than both $p$ and $q$, hence
\[
s_q\wedge\neg s_p\le s_r\wedge\neg s_p\le S_p\le S.
\]
As $q\in G$ is arbitrary, we deduce $g\wedge\neg S\le s_p$ which concludes the proof.

In the forcing extension $V[G]$, the set $U\cup\{g\}$ has the finite intersection property. Furthermore, if $U'$ is any ultrafilter on $\C_{\alpha+1}$ extending $U\cup\{g\}$, then $U'$ also has the property that for each $i<\omega$, $a_i\notin U'$. We now turn to the main lemma towards the proof of Theorem \ref{theorem:c1}.

\begin{lemma}\label{lemma:main1} It is consistent that $\uu(\C_{\omega_1})<2^{\aleph_0}$.
\end{lemma}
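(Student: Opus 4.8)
The goal is to force $\uu(\C_{\omega_1}) < 2^{\aleph_0}$. The strategy is to build a single non-principal ultrafilter $U$ on $\C_{\omega_1} = \bigcup_{\alpha<\omega_1}\C_\alpha$ that has a cofinal subset of size $\aleph_1$, while simultaneously blowing up the continuum to something larger, say $\aleph_2$. The plan is to use a finite-support iteration of the Mathias-style forcings $\Mat(U_\alpha,\C_\alpha)$ just introduced, interleaved with forcing that makes $2^{\aleph_0}$ large. Concretely, I would first collapse nothing but instead start over a model of, say, CH, and perform a finite-support iteration of length $\omega_2$ (or a suitable bookkeeping length) whose iterands are chosen by a book-keeping device that alternates between (i) steps that add Cohen reals (or whatever is needed to push $2^{\aleph_0}$ up to $\aleph_2$) and (ii) the $\Mat(U_\alpha,\C_\alpha)$ steps used to build the ultrafilter $U$ on $\C_{\omega_1}$ by recursion. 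Since each $\Mat(U_\alpha,\C_\alpha)$ is $\sigma$-centred, and finite-support iterations of $\sigma$-centred posets are $\sigma$-centred, the whole iteration is $\sigma$-centred, hence c.c.c., so no cardinals are collapsed and the iteration is well-behaved.

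The heart of the construction is the recursive assembly of $U$ as an increasing union $U = \bigcup_{\alpha<\omega_1} U_\alpha$, where each $U_\alpha$ is an ultrafilter on $\C_\alpha$ avoiding every $a_i$, and $U_{\alpha+1}$ extends $U_\alpha \cup \{g_\alpha\}$ for the generic pseudo-intersection $g_\alpha$ produced by $\Mat(U_\alpha,\C_\alpha)$. The two properties established above for $g_\alpha$ — that $\Set{i<\omega}{a_i\wedge g_\alpha>\bbot}$ is infinite while $\Set{i<\omega}{a_i\wedge g_\alpha\wedge\neg S>\bbot}$ is finite for each $S\in U_\alpha$ — are exactly what guarantee that $U_\alpha\cup\{g_\alpha\}$ has the finite intersection property and that the resulting $U_{\alpha+1}$ again avoids all $a_i$, as was noted just before this lemma. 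At limit stages $\lambda<\omega_1$ one takes unions, checking that the union remains an ultrafilter on $\C_\lambda = \bigcup_{\alpha<\lambda}\C_\alpha$ avoiding the $a_i$; this uses that $\C_\lambda$ is generated (densely) by the $\C_\alpha$ for $\alpha<\lambda$. The key point is that the witnesses $s_p$ of the conditions, together with the generics $g_\alpha$, furnish a cofinal family: I would argue that the collection $\Set{g_\alpha}{\alpha<\omega_1}$ together with a countable dense piece at each level generates a cofinal subset of $\langle U,\ge\rangle$ of cardinality $\aleph_1$, so that $\cof(\langle U,\ge\rangle)=\aleph_1<\aleph_2=2^{\aleph_0}$ in the final model.

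The main obstacle is the \emph{interleaving} and \emph{bookkeeping}: one must run an $\omega_1$-indexed recursion to build $U$ on $\C_{\omega_1}$ while running an $\omega_2$-length iteration to inflate the continuum, and ensure that the ultrafilter constructed inside the iteration remains an ultrafilter (and retains its small cofinal base) in the \emph{final} extension, not merely in the intermediate models. In particular one must verify that the $\Mat(U_\alpha,\C_\alpha)$ generics added at appropriate stages are not destroyed by later Cohen-real forcing, and that no new elements of $\C_{\omega_1}$ appear unaccounted for — here the c.c.c.\ property and the fact that $\C_{\omega_1}$ has a dense subset of size $\aleph_1$ (so its elements are captured by countably many coordinates) are essential, since they let the $\aleph_1$-sized recursion ``catch'' every element and keep the cofinal base small. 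I would handle this by arranging the bookkeeping so that each $b\in\C_{\omega_1}$ appears in the domain of the recursion at some countable stage, using that each such $b$ depends on only countably many coordinates, and by confirming that $\sigma$-centredness (hence preservation of the finite intersection property of the built filter) survives the full finite-support iteration.
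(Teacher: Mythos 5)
There is a genuine gap, and it sits exactly where you placed your ``main obstacle'': the interleaving strategy cannot work, and no bookkeeping can repair it. Your ultrafilter must be assembled from at most $\aleph_1$ many $\Mat$-stages, since it is precisely the generics $g_\alpha$ from those stages that furnish the small cofinal family; if instead you interleave $\Mat$-stages cofinally in $\omega_2$, you use $\aleph_2$ many of them and the family they produce has size $\aleph_2=2^{\aleph_0}$, which proves nothing. So suppose there are only $\aleph_1$ many $\Mat$-stages. Since $\cof(\omega_2)=\omega_2>\omega_1$, they all lie below some $\gamma<\omega_2$, and after stage $\gamma$ your iteration must still add $\aleph_2$ many Cohen reals. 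But a single Cohen real $c$ added over $V[G_\gamma]$ already destroys what you have built. Indeed, for each $n$ the classes $\eq{N_t}{\M}$, $t\in{^n2}$, form a finite partition of $\btop$, so the ultrafilter $U\in V[G_\gamma]$ contains exactly one of them for each $n$, and coherence yields a real $z\in V[G_\gamma]$ with $\eq{N_{z\upharpoonright n}}{\M}\in U$ for all $n$. Let $D_n$ be the clopen set of all $f$ agreeing with $z$ below $n$ and differing from $z$ at $n$; the $\eq{D_n}{\M}$ are pairwise disjoint, join to $\btop$, and none lies in $U$. Put $b=\bigvee\Set{\eq{D_n}{\M}}{c(n)=0}$. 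If some $u\in U$ satisfied $u\le b$ or $u\wedge b=\bbot$, then $S=\Set{n<\omega}{u\wedge\eq{D_n}{\M}>\bbot}$ would be a set in $V[G_\gamma]$ contained in $\Set{n}{c(n)=0}$, respectively in $\Set{n}{c(n)=1}$; moreover $S$ is infinite, since otherwise $u$ would be disjoint from $\eq{N_{z\upharpoonright m}}{\M}\in U$ for large $m$. An infinite set of $V[G_\gamma]$ on which $c$ is constant contradicts genericity of $c$. Hence no element of $U$ decides $b$: in the final model $U$ generates a filter that is not an ultrafilter, and if you extend it to an ultrafilter by choice, then whichever of $b,\neg b$ it contains bounds no member of your size-$\aleph_1$ family from above, so that family is no longer cofinal. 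Bookkeeping cannot save this: a bookkeeping argument needs at least as many stages as there are objects to capture, and here $\aleph_2$ many elements of $\C_{\omega_1}$ are born \emph{after} your $\aleph_1$-length recursion has ended. The fact that each element depends on countably many coordinates handles the horizontal spread over $\omega_1$ coordinates, but not this temporal problem.

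The paper avoids the difficulty by never inflating the continuum inside the iteration at all: it assumes the \emph{ground model} already satisfies $\aleph_1<2^{\aleph_0}$ and runs the finite-support iteration of the $\Mat$-forcings for exactly $\omega_1$ stages. Then, because the iteration is c.c.c.\ and its length has cofinality $\omega_1$, every element of the final $\C_{\omega_1}$ (a Borel code together with a countable set of coordinates) appears in some intermediate model $V[G_\alpha]$ and is decided by the recursion there; and a c.c.c.\ finite-support iteration of length $\omega_1$ of posets of size at most $2^{\aleph_0}$ does not change the value of the continuum, so $\aleph_1<2^{\aleph_0}$ persists and the family $\Set{g_\alpha\wedge\bigvee\Set{a_i}{i\ge n}}{\alpha<\omega_1,\ n<\omega}$ witnesses $\uu(\C_{\omega_1})=\aleph_1<2^{\aleph_0}$. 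Your plan becomes correct if you simply perform all the continuum-inflating Cohen forcing \emph{first} (that just manufactures such a ground model) and only then run the $\omega_1$-length $\Mat$-iteration, rather than interleaving the two.
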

\begin{proof} Let us assume $V$ satisfies $\aleph_1<2^{\aleph_0}$. We proceed with a finite-support iteration $\Seq{\mathbb{P}_\alpha,\dot{\mathbb{Q}}_\alpha}{\alpha<\omega_1}$ of the forcing introduced above; alongside the iteration, for each $\alpha<\omega_1$ we construct a $\mathbb{P}_\alpha$-name $\dot{U}_\alpha$ for an ultrafilter on $\C_{\omega+\alpha}$. As the iteration is $\sigma$-centred, in particular the c.c.c.\ will guarantee that cardinals are preserved in the forcing extension.

First of all, let $\mathbb{P}_0=\{\emptyset\}$ and let $\dot{U}_0$ be the name for any ground-model ultrafilter $U_0$ on $\C_\omega$ with the property that for all $i<\omega$, $a_i\notin U_0$. For $\alpha<\omega_1$, suppose $\mathbb{P}_\alpha$ and $\dot{U}_\alpha$ are given; we let
\[
\mathbb{P}_{\alpha+1}=\mathbb{P}_\alpha*\dot{\Mat}(U_\alpha,\C_{\omega+\alpha})
\]
and $\dot{g}_\alpha$ be a $\mathbb{P}_{\alpha+1}$-name for the element of $\C_{\omega+\alpha}$ added generically at this stage. We then take $\dot{U}_{\alpha+1}$ to be the name for an ultrafilter on $\C_{\omega+\alpha+1}$ extending $\dot{U}_\alpha\cup\{\dot{g}_\alpha\}$. Finally, at limit ordinals $\delta<\omega_1$, we simply let $\dot{U}_\delta$ be a name for an ultrafilter on $\C_{\omega+\delta}$ extending $\bigcup_{\alpha<\delta}\dot{U}_\alpha$.

In the resulting forcing extension $V[G]$, let $U=\bigcup_{\alpha<\omega_1}U_\alpha$. We claim that $U$ is an ultrafilter on $\C_{\omega_1}$; indeed, if $b\in\C_{\omega_1}$ then there exists some $\alpha<\omega_1$ such that $b\in\C_\alpha$. Hence, at some later stage of the iteration, either $b$ or $\neg b$ is decided to be in $U$. In conclusion, $U$ is an ultrafilter on $\C_{\omega_1}$ such that
\[
\Set*{g_\alpha\wedge\bigvee\Set{a_i}{i\ge n}}{\alpha<\omega_1,\ n<\omega}
\]
is cofinal in $\langle U,\ge\rangle$, thus witnessing the fact that $\aleph_1=\uu(\C_{\omega_1})<2^{\aleph_0}$ in $V[G]$.
\end{proof}

\begin{proof}[Proof of Theorem \ref{theorem:c1}] Let us assume $V$ satisfies Martin's Axiom \cite{martinsolovay:internal} and consider the forcing of Lemma \ref{lemma:main1}, which resulted in a model of $\uu(\C_{\omega_1})<2^{\aleph_0}$. The forcing is $\sigma$-centred, being an iteration of $\sigma$-centred forcings, and it is known that such extension will also satisfy $\non(\N)=2^{\aleph_0}$. Indeed, this is explained in detail in the proof of \cite[Corollary 41]{bbbhhl:rearrangement}, which is enough to conclude the proof of our theorem.
\end{proof}

We have thus established Theorem \ref{theorem:c1} which, together with the inequality $\non(\N)\le\uu(\B_\omega)$ discussed in Theorem \ref{theorem:hasse}, also yields the consistency of $\uu(\C_{\omega_1})<\uu(\B_\omega)$. We now turn to the second consistency result.

\begin{theorem}\label{theorem:c2} It is consistent that $\uu(\B_{\omega_1})<2^{\aleph_0}$.
\end{theorem}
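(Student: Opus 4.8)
The aim is to produce a model carrying a non-principal ultrafilter on $\B_{\omega_1}$ generated by only $\aleph_1$ elements while $2^{\aleph_0}>\aleph_1$; this gives $\uu(\B_{\omega_1})\le\aleph_1<2^{\aleph_0}$. The first point I would stress is that, unlike Theorem \ref{theorem:c1}, the model of Lemma \ref{lemma:main1} cannot be reused, nor can any finite-support iteration. Indeed, by Theorem \ref{theorem:hasse} we have $\cof(\N)\le\uu(\B_\omega)\le\uu(\B_{\omega_1})$, so any witness to $\uu(\B_{\omega_1})<2^{\aleph_0}$ must also satisfy $\cof(\N)<2^{\aleph_0}$; but the $\sigma$-centred extension of Theorem \ref{theorem:c1} has $\non(\N)=2^{\aleph_0}$, whence $\cof(\N)\ge\non(\N)=2^{\aleph_0}$ and $\uu(\B_{\omega_1})=2^{\aleph_0}$ there. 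Moreover, the literal random analogue of the Mathias forcing fails to be c.c.c.: taking conditions $\langle s,S\rangle$ with $s$ supported on finitely many $a_i$ and $S\in U$, and fixing $a_0\notin U$, the conditions $\langle t,\neg a_0\rangle$ with $\bbot<t\le a_0$ are pairwise incompatible — any common extension must keep its stem below $a_0$ yet, by the ordering $s_q\wedge\neg s_p\le S_p$, differ from each $t$ only within $\neg a_0$, forcing the two stems to coincide — and there are $2^{\aleph_0}$ of them. So the generic approach has to be dropped entirely.

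Instead, the plan is to carry out a direct transfinite recursion inside a fixed model of $\cof(\N)=\aleph_1<2^{\aleph_0}$, whose existence is standard (for instance the countable-support iterated Sacks model). Using Remark \ref{remark:induce} to write $\B_{\omega_1}=\bigcup_{\alpha<\omega_1}\B_{\omega+\alpha}$ as an increasing union along complete embeddings, I would build a coherent chain of ultrafilters $U_\alpha$ on $\B_{\omega+\alpha}$, each with all $a_i\notin U_\alpha$ and each carrying a base $\mathcal{B}_\alpha$ of size $\aleph_1$, arranged so that $U_\alpha=U_\beta\cap\B_{\omega+\alpha}$ and $\mathcal{B}_\alpha\subseteq\mathcal{B}_\beta$ for $\alpha<\beta$. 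Then $U=\bigcup_{\alpha<\omega_1}U_\alpha$ is an ultrafilter on $\B_{\omega_1}$ whose base $\bigcup_\alpha\mathcal{B}_\alpha$ is cofinal in $\langle U,\ge\rangle$ and has size $\aleph_1$, giving the theorem. The successor steps are routine: $\B_{\omega+\alpha+1}$ is generated over $\B_{\omega+\alpha}$ by a single independent element $r$, every element has the form $(b_0\wedge r)\vee(b_1\wedge\neg r)$, and declaring $r\in U_{\alpha+1}$ extends the ultrafilter while enlarging the base only to $\Set{b\wedge r}{b\in\mathcal{B}_\alpha}$, of the same cardinality.

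All the difficulty is concentrated at the limit stages $\delta<\omega_1$, where $\B_{\omega+\delta}\cong\B_\omega$ and the union $\bigcup_{\alpha<\delta}U_\alpha$, though an ultrafilter on the dense subalgebra $\bigcup_{\alpha<\delta}\B_{\omega+\alpha}$, need not generate an ultrafilter on the $\sigma$-completion: countable infima of members of the chain are genuinely new elements that must be decided without pushing the base past $\aleph_1$. Here I would invoke the direction of Burke's analysis \cite{burke:dense} opposite to the one used in Theorem \ref{theorem:hasse}, turning a cofinal family in the null ideal into a decisive ultrafilter base. Since every countable subset of $\omega_1$ is bounded, $\cof(\N(\omega+\delta))=\cof(\N)=\aleph_1$ at each such $\delta$, so a cofinal family $\Set{N_\xi}{\xi<\aleph_1}$ in the relevant null ideal is available and can be used to diagonalise, deciding the new Borel elements one co-null set at a time while keeping the generating family of size $\aleph_1$.

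The step I expect to be the genuine obstacle is not any single limit extension but their coherence: the bases produced at different limits must nest, and their union over all $\delta<\omega_1$ must remain of size $\aleph_1$. Securing this requires choosing the cofinal families in the null ideals coherently along the tower of coordinate sets $\omega+\alpha$ — the measure-theoretic counterpart of the bookkeeping that organised the Cohen iteration — and verifying that the decisive base manufactured at stage $\delta$ extends the one already built below $\delta$. Once this coherent limit construction is in place, the recursion runs through all $\alpha<\omega_1$ and the resulting $U$ witnesses $\uu(\B_{\omega_1})\le\aleph_1<2^{\aleph_0}$.
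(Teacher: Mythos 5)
Your opening move---from the two correct observations that the model of Theorem~\ref{theorem:c1} has $\non(\N)=2^{\aleph_0}$ (hence cannot witness this theorem) and that the verbatim random-algebra analogue of $\Mat(U,\C_\alpha)$ has an antichain of size $2^{\aleph_0}$---to the conclusion that ``the generic approach has to be dropped entirely'' is a non sequitur, and it is false: the paper proves Theorem~\ref{theorem:c2} precisely by a finite-support iteration. At stage $\alpha$ one forces with Kunen's poset $\K(U_\alpha,\B_{\omega+\alpha})$ from \cite{kunen:points}, whose conditions are eventually constant decreasing sequences $p\colon\omega\to U_\alpha$ with $\mu(p(n))>\frac{1}{2^{n+1}}$. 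This poset is c.c.c.\ (which is all the iteration needs; $\sigma$-centredness, the property your antichain computation and the lack of a countable dense subalgebra rule out, is never used), and its generic $g_\alpha$ satisfies that for every $u\in U_\alpha$ there is $n$ with $g_\alpha(n)\le u$. Iterating for $\omega_1$ steps over any ground model of $\aleph_1<2^{\aleph_0}$ and collecting $\Set{g_\alpha(n)}{\alpha<\omega_1,\ n<\omega}$ gives an ultrafilter on $\B_{\omega_1}$ with a cofinal subset of size $\aleph_1$. Your implicit premise, that the Cohen reals added at limits of any finite-support iteration drive $\non(\N)$ and hence $\cof(\N)$ up to $2^{\aleph_0}$, is justified in your text only for the particular $\sigma$-centred iteration over a model of Martin's Axiom used in Theorem~\ref{theorem:c1}; it fails in general, and indeed by Theorem~\ref{theorem:hasse} the Kunen-style extension automatically satisfies $\cof(\N)=\aleph_1$, so c.c.c.\ finite-support iterations are perfectly compatible with a small cofinality of the null ideal.

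This matters because the alternative you propose omits exactly the step that carries the whole theorem. At a countable limit $\delta$ you must extend the filter generated by $\bigcup_{\alpha<\delta}U_\alpha$---a filter on the subalgebra $\bigcup_{\alpha<\delta}\B_{\omega+\alpha}$, which, unlike in the Cohen case, is \emph{not} dense in $\B_{\omega+\delta}$ (clopen sets are dense mod meagre but not mod null; compare the two clauses of Proposition~\ref{proposition:dense})---to an ultrafilter on $\B_{\omega+\delta}\cong\B_\omega$ carrying a base of size $\aleph_1$, using only $\cof(\N)=\aleph_1$. Burke's theorem \cite{burke:dense} goes in one direction only: a cofinal subset of an ultrafilter on $\B_\omega$ yields a cofinal family in $\N$ of the same size, which is why $\cof(\N)\le\uu(\B_\omega)$; there is no converse construction turning a cofinal family $\Set{N_\xi}{\xi<\omega_1}\subseteq\N$ into an $\aleph_1$-generated ultrafilter, and ``diagonalising one co-null set at a time'' does not explain how $\aleph_1$ downward-directed generators are to decide all $2^{\aleph_0}>\aleph_1$ elements of $\B_{\omega+\delta}$ (note the base cannot even be a tower: a $\le$-decreasing $\omega_1$-sequence in $\B_\omega$ has eventually constant measure, hence is eventually constant modulo null sets). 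If this limit step could be carried out, you would have proved the ZFC implication that $\cof(\N)=\aleph_1$ implies $\uu(\B_{\omega_1})=\aleph_1$, a statement far stronger than Theorem~\ref{theorem:c2}, which the paper neither proves nor claims---it is precisely because no such implication is known that the paper forces the witnessing ultrafilter into existence rather than building it inside a given model of $\cof(\N)<2^{\aleph_0}$. As it stands, your argument reduces the theorem to an unproved, and possibly false, assertion.
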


Our proof employs the finite-support iteration of a forcing notion originally introduced by Kunen \cite{kunen:points}. For every infinite $\alpha<\omega_1$ and every ultrafilter $U$ on $\B_\alpha$, let $\K(U,\B_\alpha)$ be the notion of forcing described as follows. Conditions are functions $p\colon\omega\to U$, where
\begin{itemize}
\item for all $n<\omega$, $p(n+1)\le p(n)$;
\item $p$ is eventually constant;
\item for all $n<\omega$, $\mu(p(n))>\frac{1}{2^{n+1}}$.
\end{itemize}
The order relation is pointwise: for $p,q\in\K(U,\B_\alpha)$, by definition
\[
q\le_\alpha p\iff\forall n(q(n)\le p(n)).
\]

The following lemma, although originally stated as a consequence of Martin's Axiom, has a straightforward translation to the present context.
 
\begin{lemma}[{Kunen \cite[Lemma 4.3.1]{kunen:points}}] The notion of forcing $\K(U,\B_\alpha)$ is c.c.c.\ Furthermore, if $G$ is a $\K(U,\B_\alpha)$-generic filter and $g\colon\omega\to\B_\alpha$ is defined by
\[
g(n)=\bigwedge\Set{p(n)}{p\in G},
\]
then the following hold:
\begin{itemize}
\item for all $n<\omega$, $g(n+1)\le g(n)$;
\item for all $n<\omega$, $\mu(g(n))\ge\frac{1}{2^{n+1}}$;
\item for every $u\in U$ there exists $n<\omega$ such that $g(n)\le u$.
\end{itemize}
\end{lemma}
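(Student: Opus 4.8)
The plan is to prove the c.c.c.\ by a measure-theoretic refinement argument exploiting separability of the random algebra, and then to read off the three properties of $g$ from the fact that $G$ is a downward-directed filter, together with a single density argument for the last one.

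For the c.c.c., suppose towards a contradiction that $\Set{p_\xi}{\xi<\omega_1}$ is an antichain. First I would note that two conditions $p,q$ are compatible as soon as their pointwise meet $n\mapsto p(n)\wedge q(n)$ is again a condition: it automatically lies in $U$ (a filter), is decreasing, and is eventually constant, so the only requirement that can fail is $\mu(p(n)\wedge q(n))>1/2^{n+1}$. Hence incompatibility of $p,q$ forces $\mu(p(n)\wedge q(n))\le 1/2^{n+1}$ for some $n$. Now I refine to an uncountable subfamily three times: (i) each condition is eventually constant, so fix its stabilisation level and, by pigeonhole, assume it equals a single $N$ for all $\xi$, so that $p_\xi$ is coded by the finite tuple $\Seq{p_\xi(0),\dots,p_\xi(N)}{}$; (ii) since $\mu(p_\xi(n))>1/2^{n+1}$, by pigeonhole over the finitely many $n\le N$ assume $\mu(p_\xi(n))\ge 1/2^{n+1}+\varepsilon_n$ for fixed rationals $\varepsilon_n>0$; (iii) because $\alpha<\omega_1$ is countable, $\B_\alpha$ is separable in the measure metric $d(a,b)=\mu(a\mathbin{\triangle}b)$, hence so is $(\B_\alpha)^{N+1}$, and an uncountable subset of a separable metric space cannot be $\delta$-separated, so there are $\xi\neq\xi'$ with $d(p_\xi(n),p_{\xi'}(n))<\delta$ for all $n\le N$, where $\delta=\tfrac12\min_{n\le N}\varepsilon_n$. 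Then $\mu(p_\xi(n)\wedge p_{\xi'}(n))\ge\mu(p_\xi(n))-\delta>1/2^{n+1}$ for every $n$, so the pointwise meet is a condition and $p_\xi,p_{\xi'}$ are compatible, a contradiction.

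For the properties of $g$ I use that $G$ is a downward-directed filter. Monotonicity is immediate: $g(n+1)\le p(n+1)\le p(n)$ for every $p\in G$, so $g(n+1)$ is a lower bound of $\Set{p(n)}{p\in G}$ and thus $g(n+1)\le g(n)$. For the measure bound, the family $\Set{p(n)}{p\in G}$ is downward directed, so in a measure algebra $\mu(g(n))=\inf_{p\in G}\mu(p(n))$; here I would justify the directed-infimum identity by extracting from the family a decreasing sequence $b_k$ with $\mu(b_k)\to\inf$ and verifying $\bigwedge_k b_k=g(n)$ by a disjointness-and-measure computation, after which downward continuity of $\mu$ applies. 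Since each $\mu(p(n))>1/2^{n+1}$, the infimum is $\ge 1/2^{n+1}$. For the cofinality property, given $u\in U$ I show that $D_u=\Set{p}{\exists m\,(p(m)\le u)}$ is dense: starting from a condition $p$ stabilising at level $N$ with value $p^*$, note $p^*\wedge u\in U$ so $\mu(p^*\wedge u)>0$, pick $m>N$ with $1/2^{m+1}<\mu(p^*\wedge u)$, and extend $p$ by keeping the value $p^*$ down to level $m$ and setting the constant tail to $p^*\wedge u$; this is a condition below $p$ lying in $D_u$. Genericity then provides $p\in G$ and $m$ with $p(m)\le u$, whence $g(m)\le p(m)\le u$.

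The main obstacle is the c.c.c.\ argument, and specifically choosing the refinements in (i)--(iii) so that coordinatewise $\delta$-closeness of two surviving conditions, combined with the measure surplus $\varepsilon_n$, pushes their pointwise meet past the threshold $1/2^{n+1}$ at every coordinate at once. The only other delicate point is the identity $\mu(g(n))=\inf_{p\in G}\mu(p(n))$ for the possibly uncountable directed family $\Set{p(n)}{p\in G}$, which must be reduced to a countable decreasing sequence before invoking continuity of the measure from above.
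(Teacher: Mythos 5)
Your proof is correct. The paper itself does not prove this lemma --- it cites Kunen's Lemma 4.3.1 and remarks that the translation to the present context is straightforward --- and your argument is precisely the standard one: separability of $\B_\alpha$ in the metric $\mu(a\mathbin{\triangle}b)$ (available exactly because $\alpha<\omega_1$, and genuinely necessary, since for uncountable $\alpha$ uncountably many conditions with stochastically independent supports would form an antichain) defeats an uncountable antichain after the stabilisation-level and rational-surplus refinements, while the three properties of $g$ follow from directedness of $G$, the reduction of the directed infimum to a countable decreasing sequence, and the density of $\Set{p}{\exists m\,(p(m)\le u)}$, all of which you carry out correctly.
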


Consequently, in the forcing extension $V[G]$, the set $U\cup\Set{g(n)}{n<\omega}$ has the finite intersection property.

\begin{proof}[Proof of Theorem \ref{theorem:c2}] Let us assume $V$ satisfies $\aleph_1<2^{\aleph_0}$. As in the proof of Theorem \ref{theorem:c1}, we proceed with a finite-support iteration $\Seq{\mathbb{P}_\alpha,\dot{\mathbb{Q}}_\alpha}{\alpha<\omega_1}$ of the c.c.c.\ forcing just introduced; alongside the iteration, for each $\alpha<\omega_1$ we construct a $\mathbb{P}_\alpha$-name $\dot{U}_\alpha$ for an ultrafilter on $\B_{\omega+\alpha}$.

The base case of the iteration presents no complications. For $\alpha<\omega_1$, suppose $\mathbb{P}_\alpha$ and $\dot{U}_\alpha$ are given; we let
\[
\mathbb{P}_{\alpha+1}=\mathbb{P}_\alpha*\dot{\K}(U_\alpha,\B_{\omega+\alpha})
\]
and $\dot{g}_\alpha$ be a $\mathbb{P}_{\alpha+1}$-name for the function $\omega\longrightarrow\B_{\omega+\alpha}$ added generically at this stage. We then take $\dot{U}_{\alpha+1}$ to be the name for an ultrafilter on $\B_{\omega+\alpha+1}$ extending $\dot{U}_\alpha\cup\Set{\dot{g}_\alpha(n)}{n<\omega}$. Finally, at countable limit ordinals, we proceed as usual by taking unions.

In the resulting forcing extension $V[G]$, let $U=\bigcup_{\alpha<\omega_1}U_\alpha$. Then, $U$ is an ultrafilter on $\B_{\omega_1}$ such that
\[
\Set{g_\alpha(n)}{\alpha<\omega_1,\ n<\omega}
\]
is cofinal in $\langle U,\ge\rangle$, thus witnessing the fact that $\aleph_1=\uu(\B_{\omega_1})<2^{\aleph_0}$ in $V[G]$.
\end{proof}

The following question remains open:

\begin{question} Is it consistent that $\uu(\B_\omega)<\uu(\C_\omega)$?
\end{question}

\section{Non-maximal ultrafilters on Cohen algebras}\label{section:quattro}

Usual constructions of ultrafilters over $\omega$ which are not Tukey maximal, as in Dobrinen and Todor\v{c}evi\'c \cite[Section 3]{dt:tukey}, rely on the existence of $P$-points.

\begin{definition}[Gillman and Henriksen \cite{gh:ppoint}] An ultrafilter $U$ over $\omega$ is a \emph{$P$-point} if whenever $\Set{X_n}{n<\omega}\subset U$, there exists $Y\in U$ such that for each $n<\omega$, $Y\setminus X_n$ is finite.
\end{definition}

In this section, we shall make use of a related combinatorial notion to obtain non-maximal ultrafilters on some complete c.c.c.\ Boolean algebras, under the assumption that $\dd=2^{\aleph_0}$.

\begin{definition}[{Star\'y \cite[Definition 3.1]{stary:coherent}}] Let $\B$ be a complete c.c.c.\ Boolean algebra. An ultrafilter $U$ on $\B$ is a \emph{coherent $P$-ultrafilter} if for every maximal antichain $\Set{p_i}{i<\omega}$ in $\B$, the set
\[
\Set*{X\subseteq\omega}{\bigvee\Set{p_i}{i\in X}\in U}
\]
is a $P$-point ultrafilter over $\omega$.
\end{definition}

We begin with a reformulation which will be useful for the proof of Theorem \ref{theorem:weight}.

\begin{lemma}\label{lemma:char} Let $\B$ be a complete c.c.c.\ Boolean algebra. For an ultrafilter $U$ on $\B$, the following conditions are equivalent:
\begin{enumerate}
\item\label{lemma:charuno} $U$ is a coherent $P$-ultrafilter;
\item\label{lemma:chardue} for every maximal antichain $\Set{a_i}{i<\omega}$ in $\B$ and every $\Set{x_n}{n<\omega}\subseteq U$, there exists $y\in U$ such that for each $n<\omega$ the set $\Set*{i<\omega}{a_i\wedge y\wedge\neg x_n>\bbot}$ is finite.
\end{enumerate}
\end{lemma}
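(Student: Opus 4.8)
The plan is to work throughout with the \emph{trace correspondence} attached to a maximal antichain. Given a maximal antichain $\Set{p_i}{i<\omega}$ in $\B$, the map $h\colon\mathcal{P}(\omega)\to\B$ sending $X$ to $\bigvee\Set{p_i}{i\in X}$ is a complete homomorphism, because $\B$ is complete and $\Set{p_i}{i<\omega}$ is a maximal antichain. Its preimage $U_{\vec p}=h^{-1}[U]=\Set{X\subseteq\omega}{\bigvee\Set{p_i}{i\in X}\in U}$ is precisely the ultrafilter over $\omega$ appearing in the definition of coherent $P$-ultrafilter. The whole argument will consist of translating between an element $y\in U$ and its trace $\Set{i<\omega}{p_i\wedge y>\bbot}$, using repeatedly that the members of an antichain are pairwise disjoint and that $y=\bigvee_i(y\wedge p_i)$ by the one-sided infinite distributive law valid in any complete Boolean algebra. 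I note once and for all that the argument is insensitive to whether $U_{\vec p}$ is principal, since principal ultrafilters are trivially $P$-points.

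For the direction $(\ref{lemma:chardue})\Rightarrow(\ref{lemma:charuno})$, which I expect to be routine, I would fix a maximal antichain $\Set{p_i}{i<\omega}$ and a family $\Set{X_n}{n<\omega}\subseteq U_{\vec p}$, and set $x_n=\bigvee\Set{p_i}{i\in X_n}\in U$. Applying condition \ref{lemma:chardue} to the antichain $\Set{p_i}{i<\omega}$ and the $x_n$ produces $y\in U$ for which each set $\Set{i<\omega}{p_i\wedge y\wedge\neg x_n>\bbot}$ is finite. I would then take $Y=\Set{i<\omega}{p_i\wedge y>\bbot}$. Since $y\le\bigvee\Set{p_i}{i\in Y}$, we get $Y\in U_{\vec p}$; and since $p_i\le\neg x_n$ whenever $i\notin X_n$ (by disjointness of the $p_i$), every $i\in Y\setminus X_n$ satisfies $p_i\wedge y\wedge\neg x_n>\bbot$, so $Y\setminus X_n$ is finite for each $n$. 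Hence $U_{\vec p}$ is a $P$-point.

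The substantial direction is $(\ref{lemma:charuno})\Rightarrow(\ref{lemma:chardue})$, and here the main obstacle is that the given antichain $\Set{a_i}{i<\omega}$ need not refine the $x_n$, so one cannot read the conclusion directly off the $P$-point property of $U_{\vec a}$: the desired $y$ must be strictly finer than any union of the $a_i$. My plan is therefore to manufacture a finer antichain from the $x_n$ and invoke coherence for \emph{that} antichain. First I would reduce to a decreasing sequence by replacing $x_n$ with $\bigwedge_{k\le n}x_k\in U$; since $\neg x_n\le\neg\bigwedge_{k\le n}x_k$, any $y$ working for the decreasing sequence works for the original. With $\Seq{x_n}{n<\omega}$ decreasing, I would split each $a_i$ into the disjoint pieces $a_i\wedge\neg x_0$, the layers $a_i\wedge x_m\wedge\neg x_{m+1}$ for $m<\omega$, and $a_i\wedge\bigwedge_k x_k$, discarding the $\bbot$ pieces; after fixing a bijection of the index set with $\omega$ this is a maximal antichain $\Set{b_j}{j<\omega}$ refining $\Set{a_i}{i<\omega}$, whose members I label $b_{i,m}$. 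The key computation is that the telescoping identity $x_n=\bigvee_{m\ge n}(x_m\wedge\neg x_{m+1})\vee\bigwedge_k x_k$ gives $\bigvee\Set{b_{i,m}}{m\ge n\text{ or }m=\infty}=x_n\in U$, so the ``high-layer'' set $Z_n$ of those indices lies in $U_{\vec b}$. Applying the $P$-point property of $U_{\vec b}$ (coherence for $\Set{b_j}{j<\omega}$) to $\Set{Z_n}{n<\omega}$ yields $Y\in U_{\vec b}$ with each $Y\setminus Z_n$ finite; I then set $y=\bigvee\Set{b_j}{j\in Y}\in U$. Finally, using that $b_{i,m}\le x_n$ for $m\ge n$ (and for $m=\infty$) while $b_{i,m}\le\neg x_n$ for $m<n$ (and for the $\neg x_0$ piece), a direct check shows that $\Set{i<\omega}{a_i\wedge y\wedge\neg x_n>\bbot}$ is contained in the set of first coordinates occurring in the finite set $Y\setminus Z_n$, and is therefore finite, as required.
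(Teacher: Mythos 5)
Your proof is correct, and the easy direction $(2)\Rightarrow(1)$ is essentially identical to the paper's. For the substantial direction $(1)\Rightarrow(2)$ you follow the same overall skeleton as the paper --- construct a countable maximal antichain refining $\Set{a_i}{i<\omega}$ on which the $x_n$ become (almost) measurable, apply the coherence hypothesis to the induced trace ultrafilter on that refinement, and project back to the $a_i$ --- but with a genuinely different decomposition. The paper splits each $a_i$ into the finite Boolean combinations $b_s=a_{\dom(s)}\wedge\bigwedge\Set{x^{s(n)}_n}{n<\dom(s)}$ for $s\in{^{<\omega}2}$; this uses only finite meets, at the price that each $x_n$ may still be straddled by the finitely many pieces with $\dom(s)\le n$, so the final count is a union of two finite sets (the straddling indices together with $Y\setminus X_n$). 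Your route monotonizes first and then uses the telescoping layers $a_i\wedge\neg x_0$, $a_i\wedge x_m\wedge\neg x_{m+1}$, $a_i\wedge\bigwedge_k x_k$; this requires countable meets (harmless, since $\B$ is complete) and rests on the identity $x_n=\bigvee_{m\ge n}(x_m\wedge\neg x_{m+1})\vee\bigwedge_k x_k$, which you state without proof --- it does deserve a line of verification, though it is a standard fact valid in any $\sigma$-complete Boolean algebra, and its $n=0$ instance also yields the maximality of your refined antichain. What this buys is an exact measuring: every piece lies below $x_n$ or below $\neg x_n$, so your final accounting reduces to the single finite set $Y\setminus Z_n$, with no straddling terms. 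In short, the paper's refinement is more elementary (no infinite operations), while yours is cleaner at the verification stage; both are sound.
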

\begin{proof} $(\ref{lemma:charuno}\Longrightarrow\ref{lemma:chardue})$ Suppose $U$ is a coherent $P$-ultrafilter; let a maximal antichain $\Set{a_i}{i<\omega}$ and a subset $\Set{x_n}{n<\omega}\subseteq U$ be given. We aim to find first a maximal antichain $\Set{p_i}{i<\omega}$ such that
\begin{itemize}
\item for all $i<\omega$ there exists $j<\omega$ such that $p_i\le a_j$;
\item for every $n<\omega$ the set $\Set{i<\omega}{p_i\wedge x_n>\bbot\text{ and }p_i\wedge\neg x_n>\bbot}$ is finite.
\end{itemize}
To do so, it is convenient to use the following notation: for $n<\omega$, let
\[
x^1_n=x_n\quad\text{ and }x^0_n=\neg x_n.
\]
Now, for $s\in{^{<\omega}2}$ we define
\[
b_s=a_{\dom(s)}\wedge\bigwedge\Set*{x^{s(n)}_n}{n<\dom(s)}.
\]

First, it is clear that for all $s,t\in{^{<\omega}2}$, if $b_s\wedge b_t>\bbot$ then $s=t$. Secondly, by finite distributivity we have for each $i<\omega$
\[
a_i=\bigvee\Set{b_s}{\dom(s)=i}
\]
and consequently
\[
\bigvee\Set*{b_s}{s\in{^{<\omega}2}}=\bigvee\Set*{\bigvee\Set{b_s}{\dom(s)=i}}{i<\omega}=\bigvee\Set{a_i}{i<\omega}=\btop.
\]
Thirdly, we observe that for every $n<\omega$
\[
\Set*{s\in{^{<\omega}2}}{b_s\wedge x_n>\bbot\text{ and }b_s\wedge\neg x_n>\bbot}\subseteq\Set*{s\in{^{<\omega}2}}{\dom(s)\le n}
\]
and the set on the right-hand side is finite. In conclusion, if we enumerate
\[
\Set{p_i}{i<\omega}=\Set*{b_s}{s\in{^{<\omega}2}}\setminus\{\bbot\},
\]
then it is clear that $\Set{p_i}{i<\omega}$ is a maximal antichain in $\B$ satisfying the two desired properties.

For every $n<\omega$, the set $X_n=\Set*{i<\omega}{p_i\wedge x_n>\bbot}$ is such that $\bigvee\Set*{p_i}{i\in X_n}\in U$ hence, by the assumption that $U$ is a coherent $P$-ultrafilter, we can find $Y\subseteq\omega$ such that $y=\bigvee\Set{p_i}{i\in Y}\in U$ and for each $n<\omega$, $Y\setminus X_n$ is finite. Now we observe that for $n<\omega$
\[
\Set{i<\omega}{p_i\wedge y\wedge\neg x_n>\bbot}=\Set{i\in Y}{p_i\wedge x_n>\bbot\text{ and }p_i\wedge\neg x_n>\bbot}\cup(Y\setminus X_n)
\]
which is finite. From this and the fact that $\Set{p_i}{i<\omega}$ refines $\Set{a_i}{i<\omega}$, it follows that for each $n<\omega$ also $\Set{i<\omega}{a_i\wedge y\wedge\neg x_n>\bbot}$ is finite.

$(\ref{lemma:chardue}\Longrightarrow\ref{lemma:charuno})$ Let $\Set{a_i}{i<\omega}$ be a maximal antichain in $\B$. Assume that for each $n<\omega$, $X_n\subseteq\omega$ is such that $x_n=\bigvee\Set{a_i}{i\in X_n}\in U$. By our hypothesis, there exists $y\in U$ such that for each $n<\omega$ the set $\Set{i<\omega}{a_i\wedge y\wedge\neg x_n>\bbot}$ is finite. Letting $Y=\Set{i<\omega}{a_i\wedge y>\bbot}$, it is obvious that $\bigvee\Set{a_i}{i\in Y}\in U$ and for each $n<\omega$, $Y\setminus X_n$ is finite.
\end{proof}

We note that general existence results for coherent $P$-ultrafilters are proved in Star\'y \cite[Section 3]{stary:coherent}; however, the following is all we need for the purpose of this section.

\begin{proposition}[{Star\'y \cite[Proposition 3.4]{stary:coherent}}]\label{proposition:cexist} Assume $\dd=2^{\aleph_0}$. Let $\B$ be a complete c.c.c.\ Boolean algebra of cardinality $2^{\aleph_0}$. Then there exists a non-principal coherent $P$-ultrafilter on $\B$.
\end{proposition}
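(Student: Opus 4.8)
The plan is to construct the desired ultrafilter by transfinite recursion of length $2^{\aleph_0}$, using $\dd=2^{\aleph_0}$ to diagonalize against all countable antichain/subset pairs. First I would enumerate as $\Seq{\langle A_\xi,F_\xi\rangle}{\xi<2^{\aleph_0}}$ all pairs where $A_\xi=\Set{a^\xi_i}{i<\omega}$ is a maximal antichain in $\B$ and $F_\xi=\Set{x^\xi_n}{n<\omega}$ is a countable subset of $\B$; since $\abs{\B}=2^{\aleph_0}$ there are exactly $2^{\aleph_0}$ such pairs, and by bookkeeping we may ensure each one is considered cofinally often. Along the recursion I would build an increasing chain of filter bases $\Seq{\mathcal{F}_\xi}{\xi<2^{\aleph_0}}$ with the finite intersection property, each generated by $<2^{\aleph_0}$ elements, whose union generates the final ultrafilter $U$. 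I would aim to verify condition \eqref{lemma:chardue} of Lemma \ref{lemma:char} directly: at stage $\xi$, if the subset $F_\xi\subseteq\mathcal{F}_\xi$ has already been committed into the filter, I would adjoin a single $y_\xi$ witnessing that for every $n<\omega$ the set $\Set*{i<\omega}{a^\xi_i\wedge y_\xi\wedge\neg x^\xi_n>\bbot}$ is finite.

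The heart of the construction is the successor step, which is where the hypothesis $\dd=2^{\aleph_0}$ enters. Given the current filter base $\mathcal{F}_\xi$ (of size $<2^{\aleph_0}=\dd$), a maximal antichain $\Set{a_i}{i<\omega}$, and $\Set{x_n}{n<\omega}\subseteq\mathcal{F}_\xi$, I would for each $z\in\mathcal{F}_\xi$ associate a function $h_z\colon\omega\to\omega$ recording, roughly, a bound beyond which $z$ stops meeting $a_i\wedge\neg x_n$; more precisely, using c.c.c.\ so that each $z$ meets only the relevant cells in a controlled way, I would set $h_z(n)$ to capture the largest index $i$ for which $a_i\wedge z\wedge\neg x_n>\bbot$ in a suitably coded fashion. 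Because $\abs{\mathcal{F}_\xi}<\dd$, the family $\Set{h_z}{z\in\mathcal{F}_\xi}$ is not dominating, so there is a single function not dominated by — or rather, escaping — all of them, and from this escaping function I would read off an element $y$ of $\B$ (a join of pieces $a_i\wedge x_n$ chosen along the antichain) such that $y$ is compatible with every $z\in\mathcal{F}_\xi$ and such that $y$ simultaneously kills all but finitely many cells $a_i\wedge\neg x_n$ for each $n$. Adjoining $y$ (or, where necessary, passing to a translate) preserves the finite intersection property and secures the $P$-like property for the pair $\langle A_\xi,F_\xi\rangle$.

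At limit stages I would simply take unions, and the finite intersection property is preserved since it is a property of finite subsets; non-principality is arranged at the outset by including, say, the complements of all singletons, which is consistent with the finite intersection property because $\B$ is atomless as a complete c.c.c.\ algebra of size continuum (and if atoms were present one restricts below a fixed atomless element). After the recursion, any ultrafilter $U$ extending $\bigcup_{\xi<2^{\aleph_0}}\mathcal{F}_\xi$ is as desired: every pair $\langle A,F\rangle$ with $F\subseteq U$ was handled at some stage where $F$ had already entered the filter, so $U$ satisfies \eqref{lemma:chardue} and is therefore a coherent $P$-ultrafilter by Lemma \ref{lemma:char}.

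I expect the main obstacle to be the successor step, specifically the precise coding that turns the non-domination of $\Set{h_z}{z\in\mathcal{F}_\xi}$ into a single element $y$ of $\B$ with the required compatibility and finiteness properties. The delicate point is that $y$ must be compatible with \emph{every} $z$ in the filter base simultaneously, not merely with each one separately; arranging this demands that the escaping function be chosen to dominate the relevant bound on a cofinite set for each $z$, which is exactly what a dominating-style diagonalization against $<\dd$ many functions provides, but the translation between the combinatorics of functions on $\omega$ and the Boolean operations on the antichain cells must be set up carefully using the c.c.c.\ hypothesis so that only countably many cells are ever relevant to any given pair.
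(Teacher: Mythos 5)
Your overall plan is in fact the same as Star\'y's own argument (note that the paper does not reprove this proposition; it only cites \cite{stary:coherent}): a length-$2^{\aleph_0}$ recursion with bookkeeping over pairs $\langle A_\xi,F_\xi\rangle$, a Ketonen-style diagonalization at successor steps using the fact that a family of $<\dd$ functions is not dominating, and unions at limits. The strategy is right, but two steps of your write-up are genuinely broken.

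First, the successor step. Your function $h_z$ is not well defined: for an arbitrary $z$ in the filter base there is in general no \emph{largest} $i$ with $a_i\wedge z\wedge\neg x_n>\bbot$, because that set of indices is typically infinite (take $z=x_0$ and $n=1$); indeed, the fact that every filter element may meet infinitely many of the cells $a_i\wedge\neg x_n$ is precisely the difficulty the construction must overcome, so no ``suitable coding'' can rescue this definition, and the c.c.c.\ gives no control here. The correct move is dual: record where $z$ \emph{does} meet the antichain, e.g.\ $h_z(n)=\min\Set{k<\omega}{\exists i\in[n,k)\ (z\wedge x_0\wedge\dots\wedge x_n\wedge a_i>\bbot)}$ for $z$ ranging over the meet-closure of the base, pick $g$ such that for every $z$ we have $g(n)>h_z(n)$ for infinitely many $n$ (this is exactly what non-domination of $\Set{h_z}{z\in\mathcal{F}_\xi}$ gives), and set $y=\bigvee_{n<\omega}\bigl(x_0\wedge\dots\wedge x_n\wedge\bigvee_{n\le i<g(n)}a_i\bigr)$. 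Two details you state incorrectly are essential: the pieces must use the meets $x_0\wedge\dots\wedge x_n$ over initial segments, not single $x_n$'s (otherwise $y\wedge\neg x_m$ can still meet infinitely many cells), and compatibility of $y$ with every $z$ needs only infinitely-often escape, not domination ``on a cofinite set'' --- the latter is a $\mathfrak{b}$-type property which $\dd=2^{\aleph_0}$ does not provide against families of size $<\dd$, and fortunately it is not needed.

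Second, the conclusion. You finish by taking ``any ultrafilter $U$ extending $\bigcup_{\xi}\mathcal{F}_\xi$'', but nothing in your construction makes this union generate an ultrafilter, and for a proper extension $U$ a countable $F\subseteq U$ need not be contained in any $\mathcal{F}_\xi$; such a pair $\langle A,F\rangle$ is then never handled, and condition \eqref{lemma:chardue} of Lemma \ref{lemma:char} can simply fail for it. You must interleave a third task into the recursion: enumerate $\B=\Set{b_\xi}{\xi<2^{\aleph_0}}$ and at stage $\xi$ add $b_\xi$ or $\neg b_\xi$ to the base (one of the two always preserves the finite intersection property). Then $U$, the upward closure of finite meets of $\bigcup_\xi\mathcal{F}_\xi$, is itself an ultrafilter, and since $2^{\aleph_0}$ has uncountable cofinality every countable $F\subseteq U$ lies inside the filter generated by some $\mathcal{F}_\xi$, so your cofinal bookkeeping catches the pair. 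Relatedly, your non-principality fix is off: a complete c.c.c.\ algebra of cardinality $2^{\aleph_0}$ need not be atomless ($\mathcal{P}(\omega)$ is a counterexample), and if the algebra is atomic there is no atomless element to restrict below; instead, maintain along the recursion the invariant that no finite meet from the base is a finite join of atoms, which both the element-deciding step and the witnesses $y$ preserve.
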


An important observation is that, as opposed to $P$-points, coherent $P$-ultrafilters may be Tukey maximal. For example, on $\C_{2^{\aleph_0}}$ there exists a coherent $P$-ultrafilter by Proposition \ref{proposition:cexist}, which is necessarily Tukey maximal by Theorem \ref{theorem:tukeymax}.

\begin{lemma}\label{lemma:dense} Let $\B$ be a complete c.c.c.\ Boolean algebra, with a dense subalgebra $\D\subseteq\B$. Let $U$ be a coherent $P$-ultrafilter on $\B$ and $A$ be a maximal antichain with the property that $A\cap U=\emptyset$. Then for all $u\in U$ there exists $v\in U$ such that $v\le u$ and $\Set{a\wedge v}{a\in A}\subseteq\D$.
\end{lemma}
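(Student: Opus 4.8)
The plan is to invoke the characterisation of coherent $P$-ultrafilters from Lemma \ref{lemma:char}\eqref{lemma:chardue}, applied not to $A$ itself but to a refinement of $A$ consisting of elements of $\D$. Since $\B$ is c.c.c., the antichain $A$ is countable, say $A=\Set{a_i}{i<\omega}$, and the hypothesis $A\cap U=\emptyset$ means $a_i\notin U$ for every $i$. Fix $u\in U$. For each $i$, using density of $\D$ together with the countable chain condition, I would choose a countable antichain $E_i=\Set{e^i_k}{k<\omega}\subseteq\D$ consisting of elements below $u\wedge a_i$ and maximal with this property, so that $\bigvee_k e^i_k=u\wedge a_i$. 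Collecting these and completing to a maximal antichain of $\B$ by adjoining suitable elements below $\neg u$, I obtain a maximal antichain $P=\Set{p_m}{m<\omega}$ whose part below $u$ consists entirely of the $\D$-elements $e^i_k$. Writing $W=\Set{m<\omega}{p_m\le u}$ and $W_i=\Set{m<\omega}{p_m\le u\wedge a_i}$, the family $\Set{W_i}{i<\omega}$ partitions $W$, with $\bigvee\Set{p_m}{m\in W_i}=u\wedge a_i$ and $\bigvee\Set{p_m}{m\in W}=u$.

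The key step is to apply Lemma \ref{lemma:char}\eqref{lemma:chardue} to the maximal antichain $P$ and to the family $\Set{x_i}{i<\omega}\subseteq U$ given by $x_i=\neg(u\wedge a_i)$, which indeed lie in $U$ because $u\wedge a_i\le a_i\notin U$. This produces $y\in U$ such that, for every $i$, the set $\Set{m<\omega}{p_m\wedge y\wedge(u\wedge a_i)>\bbot}$ is finite, using $\neg x_i=u\wedge a_i$. I would then set $Z=\Set{m\in W}{p_m\wedge y>\bbot}$ and $v=\bigvee\Set{p_m}{m\in Z}$.

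It remains to verify the three required properties of $v$. That $v\le u$ is immediate, since every $p_m$ with $m\in Z\subseteq W$ lies below $u$. For membership in $U$, I would note that each term of $y\wedge u=\bigvee\Set{p_m\wedge y}{m\in W}$ lies below $v$ (those with $p_m\wedge y=\bbot$ trivially, the rest because then $m\in Z$), so $y\wedge u\le v$, and since $y\wedge u\in U$ this forces $v\in U$. Finally, because the $a_i$ form an antichain, $a_i\wedge v=\bigvee\Set{p_m}{m\in Z\cap W_i}$; and for $m\in W_i$ one has $p_m\wedge y=p_m\wedge y\wedge(u\wedge a_i)$, so $Z\cap W_i$ is contained in the finite set provided by Lemma \ref{lemma:char}. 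Hence $a_i\wedge v$ is a finite join of elements of $\D$ and therefore belongs to $\D$. The main obstacle, and the reason the refinement into $\D$ is indispensable, is precisely this last requirement: a set $v$ assembled directly from $A$ could never satisfy $a_i\wedge v\in\D$, so the entire point is to arrange $v$ as a join of $\D$-elements meeting each $a_i$ in only finitely many pieces, which is exactly the behaviour that the coherent $P$-property of $U$ guarantees.
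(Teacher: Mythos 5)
Your proof is correct and is essentially the paper's own argument: both refine $A$ to a countable maximal antichain whose part below $u$ consists of elements of $\D$, invoke the coherent $P$-ultrafilter property to guarantee that each $a\in A$ meets the relevant element of $U$ in only finitely many pieces of that refinement, and take $v$ to be the resulting join, which is then a finite join of $\D$-elements below each $a\in A$. The only difference is presentational: you route the key step through Lemma \ref{lemma:char}\eqref{lemma:chardue} applied to your antichain $P$ with $x_i=\neg(u\wedge a_i)$, whereas the paper applies the $P$-point property of the induced ultrafilter on $\omega$ directly to the sets $\Set{i<\omega}{a\wedge p_i=\bbot}$ and sets $v=u\wedge\bigvee\Set{p_i}{i\in Y}$; both yield the same finiteness conclusion.
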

\begin{proof} Let $u\in U$ be arbitrary; by density of $\D$, there exists a maximal antichain $\Set{p_i}{i<\omega}\subset\D$ such that:
\begin{itemize}
\item for all $i<\omega$ there exists $a\in A$ such that $p_i\le a$;
\item for all $i<\omega$, either $p_i\le u$ or $p_i\wedge u=\bbot$.
\end{itemize}

Using the assumption $A\cap U=\emptyset$, it follows that for all $a\in A$
\[
\bigvee\Set{p_i}{a\wedge p_i=\bbot}\in U.
\]
Now, by the fact that
\[
\Set*{X\subseteq\omega}{\bigvee\Set{p_i}{i\in X}\in U}
\]
is a $P$-point over $\omega$, there exists $Y\subseteq\omega$, with $\bigvee\Set{p_i}{i\in Y}\in U$, such that for every $a\in A$ the set $\Set{i\in Y}{a\wedge p_i>\bbot}$ is finite.

We claim that the element defined as
\[
v=u\wedge\bigvee\Set{p_i}{i\in Y}
\]
has the desired properties. Clearly $v\in U$ and $v\le u$. Furthermore, for each $a\in A$ we have
\[
a\wedge v=\bigvee\Set{a\wedge u\wedge p_i}{i\in Y}=\bigvee\Set{p_i}{i\in Y\text{ and }p_i\le a\wedge u},
\]
but there are only finitely many $i\in Y$ such that $a\wedge p_i>\bbot$. This means that $a\wedge v$ is a finite (possibly empty) disjunction of elements of the subalgebra $\D$, and therefore $a\wedge v\in\D$. 
\end{proof}

\begin{theorem}\label{theorem:weight} Let $\B$ be a complete c.c.c.\ Boolean algebra. If $U$ is a coherent $P$-ultrafilter on $\B$, then
\[
\Bigl\langle\fin*{\pi(\B)^+},\subseteq\Bigr\rangle\nleq_{\mathrm{T}}\langle U,\ge\rangle
\]
\end{theorem}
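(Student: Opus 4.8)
We want to show that a coherent $P$-ultrafilter $U$ on a complete c.c.c.\ Boolean algebra $\B$ is never Tukey maximal at the level $\pi(\B)^+$. By Proposition \ref{proposition:criterion}, it suffices to produce, for every subset $X\subseteq U$ with $\abs{X}=\pi(\B)^+$, an infinite subset $Y\subseteq X$ which is bounded in $\langle U,\ge\rangle$; equivalently, the negation of the second bullet of that proposition. So the plan is: fix any $X\subseteq U$ of size $\pi(\B)^+$ and find infinitely many elements of $X$ whose meet lies in $U$ (hence is a nonzero lower bound in $U$).

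Let me think about this carefully.

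**The role of $\pi$-weight.** Let me fix a dense subalgebra $\D\subseteq\B$ with $\abs{\D}=\pi(\B)$. The natural idea is to use $\D$ to "compress" the antichain structure below elements of $X$. By Lemma \ref{lemma:dense}, given any maximal antichain $A$ disjoint from $U$, each $u\in U$ can be shrunk to some $v\le u$ in $U$ with $\{a\wedge v\mid a\in A\}\subseteq\D$. The key point is that once the restriction $\{a\wedge v\mid a\in A\}$ lands inside $\D$, and $\D$ is small, a counting/pigeonhole argument over $X$ of size $\pi(\B)^+$ should force infinitely many of these restrictions to coincide.

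**The main steps.** First, fix a maximal antichain $A=\{a_i\mid i<\omega\}$ with $A\cap U=\emptyset$ (this exists because $U$ is not $\sigma$-complete — every ultrafilter on a c.c.c.\ algebra misses some maximal countable antichain, which follows as in Corollary \ref{corollary:rkomega}). Given $X\subseteq U$ of size $\pi(\B)^+$, apply Lemma \ref{lemma:dense} to each $x\in X$ to obtain $v_x\in U$ with $v_x\le x$ and $a_i\wedge v_x\in\D$ for all $i$. Now I would code each $v_x$ by the sequence $\langle a_i\wedge v_x\mid i<\omega\rangle\in{}^{\omega}\D$. Since $\abs{{}^{\omega}\D}=\pi(\B)^{\aleph_0}$, a raw pigeonhole on the full sequence need not collapse to $\pi(\B)^+$; the honest move is to pigeonhole finite initial segments or, better, use the coherent $P$-ultrafilter property to control the sequences on an end-segment. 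The cleanest route: because $\abs{\D}=\pi(\B)<\pi(\B)^+$ and there are $\pi(\B)^+$ many $x$, for each fixed $i$ the value $a_i\wedge v_x$ takes at most $\pi(\B)$ values, so by iterating pigeonhole we can thin $X$ to an infinite (indeed large) subset $X'$ on which, for each $i$, the value $a_i\wedge v_x$ is constant — call it $d_i\in\D$. Then for all $x\in X'$ and all $i$ we have $a_i\wedge v_x=d_i$.

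**Finishing via coherence.** Having stabilized the restrictions, the meet $\bigwedge_{x\in Y}v_x$ for a countable $Y\subseteq X'$ satisfies $a_i\wedge\bigwedge_{x\in Y}v_x=d_i$ for every $i$, since the $a_i$ restrictions agree across all of $Y$; because $A$ is a maximal antichain, this determines $\bigwedge_{x\in Y}v_x=\bigvee_i d_i$, independent of the choice of $Y$. Thus this common meet, call it $w=\bigvee_i d_i$, is a single fixed element below each $v_x$, so $w\le x$ for all $x\in X'$. It remains to check $w\in U$: this is exactly where the coherent $P$-ultrafilter property enters — letting $v=v_{x_0}$ for one $x_0\in X'$ we have $w=v\in U$ since $a_i\wedge v=d_i$ for all $i$ gives $v=\bigvee_i(a_i\wedge v)=\bigvee_i d_i=w$. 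Hence $w\in U$, and any infinite $Y\subseteq X'$ is bounded below by $w\in U$. By Proposition \ref{proposition:criterion} this refutes $\langle\fin*{\pi(\B)^+},\subseteq\rangle\le_{\mathrm{T}}\langle U,\ge\rangle$.

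**Anticipated obstacle.** The delicate point is the pigeonhole: stabilizing $a_i\wedge v_x$ \emph{simultaneously for all} $i<\omega$ across a set of size $\pi(\B)^+$ is not immediate, since ${}^{\omega}\D$ can be larger than $\pi(\B)^+$. I expect the actual argument to avoid full stabilization and instead invoke Lemma \ref{lemma:char}\ref{lemma:chardue} directly: pass to a countable $Y\subseteq X$, apply the coherent $P$-property to the family $\{x\mid x\in Y\}$ against the antichain $A$ to find a single $y\in U$ whose restriction agrees with each $x$ on all but finitely many $a_i$, then argue that the finitely many discrepancies can be absorbed because $A\cap U=\emptyset$. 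Reconciling the density shrinking (Lemma \ref{lemma:dense}) with the $P$-point thinning (Lemma \ref{lemma:char}), so that the bound genuinely lies in $U$ rather than merely below the $x$'s, is the technical heart of the proof and the step I would spend the most care on.
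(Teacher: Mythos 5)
There is a genuine gap, and it sits exactly at the step you flag yourself: the ``iterating pigeonhole'' does \emph{not} produce an infinite $X'\subseteq X$ on which $a_i\wedge v_x$ is constant for \emph{every} $i<\omega$ simultaneously. Iterating the pigeonhole gives, for each $m<\omega$, a set of size $\pi(\B)^+$ stabilizing the coordinates $i<m$; diagonalizing through these sets yields only a sequence on which each coordinate is \emph{eventually} constant, not constant. Full stabilization can genuinely fail: if $\pi(\B)^{\aleph_0}\ge\pi(\B)^+$ (e.g.\ $\B=\C_{\omega_1}$ when $2^{\aleph_0}\ge\aleph_2$), the coding map $x\mapsto\Seq{a_i\wedge v_x}{i<\omega}$ can be injective on $X$, so no infinite subset is constant in all coordinates. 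Conversely, when $\pi(\B)^{\aleph_0}=\pi(\B)$, full stabilization does hold on a fiber of size $\pi(\B)^+$, but then every $v_x$ on that fiber equals $\bigvee_i d_i$, i.e.\ all the $v_x$ coincide --- this is precisely the degenerate case the paper dismisses in one line, and indeed your ``finishing via coherence'' paragraph never actually uses the coherent $P$-property (what you invoke there is just completeness plus maximality of the antichain). That the coherence hypothesis goes unused is a sign this route cannot be the real argument. (A smaller slip: your claim that every ultrafilter on a c.c.c.\ algebra misses a countable maximal antichain is false, e.g.\ for principal ultrafilters at an atom; the paper instead treats $\sigma$-complete $U$ as a separate trivial case.)

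The fix is in the direction of your final paragraph, but the ``absorption of finitely many discrepancies'' needs two concrete ingredients that are missing. First, the pigeonhole must be run on \emph{finite initial segments} at cardinality $\pi(\B)^+$: this is where $\pi(\B)^+$ is really used, to produce an anchor $x\in X$ such that for every $n$ the set $X_n=\Set{u\in X}{(\forall i<n)(a_i\wedge x=a_i\wedge u)}$ is infinite; if no such $x$ existed, a map into $\D^m$ for a fixed $m$ would be finite-to-one on a set of size $\pi(\B)^+$, contradicting $\abs{\D^m}=\pi(\B)$. Second, after choosing distinct $x_n\in X_n$ and $y\in U$ from Lemma \ref{lemma:char}, the discrepancy of each $x_n$ is confined to an interval $I_n=[\gamma_n,\delta_n)$: agreement with the anchor $x$ covers $i<\gamma_n$, and $y$ covers $i\ge\delta_n$. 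Since $\gamma_n\ge n$, one can thin to pairwise disjoint intervals and then choose an infinite $Z$ with $z=\bigvee\Set{a_i}{i\in\bigcup_{n\in Z}I_n}\notin U$; this choice uses disjointness of the corresponding joins (two disjoint elements cannot both lie in $U$), \emph{not} the fact that $A\cap U=\emptyset$ as you suggest. Then $x\wedge y\wedge\neg z\in U$ is a lower bound for $\Set{x_n}{n\in Z}$, and Proposition \ref{proposition:criterion} finishes the proof. This interval argument (due to Kanamori) is the technical heart you correctly predicted would be needed, but your proposal does not supply it.
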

\begin{proof} Let $X\subseteq U$ be such that $\pi(\B)<\abs{X}$; we aim to find some infinite $Y\subseteq X$ such that $\bigwedge Y\in U$. If $U$ is $\sigma$-complete then this is immediate, so let us assume $U$ is not $\sigma$-complete, which means there exists a maximal antichain $\Set{a_i}{i<\omega}$ such that for all $i<\omega$, $a_i\notin U$.

Let $\D\subseteq\B$ be a dense subalgebra such that $\abs{\D}=\pi(\B)$. Without loss of generality, we may assume that for all $u\in X$, $\Set{a_i\wedge u}{i<\omega}\subseteq\D$. Indeed, by Lemma \ref{lemma:dense}, for all $u\in X$ there exists $v_u\in U$ such that $v_u\le u$ and $\Set{a\wedge v_u}{a\in A}\subseteq\D$. In case we have chosen the same $v_u$ for infinitely many $u\in X$, we have the thesis already. On the other hand, if the map $u\mapsto v_u$ is finite-to-one, without loss of generality we may replace $X$ with $\Set{v_u}{u\in X}$, which also has cardinality greater than $\pi(\B)$.

Now, we claim that there is some $x\in X$ such that for all $n<\omega$
\[
X_n=\Set{u\in X}{(\forall i<n)(a_i\wedge x=a_i\wedge u)}
\]
is infinite. If not, then for each $x\in X$ we could find some $n_x<\omega$ such that the set
\begin{equation}\label{eq:finite}
\Set{u\in X}{(\forall i<n_x)(a_i\wedge x=a_i\wedge u)}
\end{equation}
is finite. Clearly there exists $X'\subseteq X$, with $\pi(\B)<\abs{X'}$, such that for all $x\in X'$, $n_x=m$ for some fixed $m<\omega$. Now, the function
\[
\begin{split}
X' &\longrightarrow\D^m \\
x &\longmapsto\Seq{a_i\wedge x}{i<m}
\end{split}
\]
cannot be injective, hence there must be some infinite $X''\subseteq X'$ such that $x,x'\in X''$ implies $(\forall i<m)(a_i\wedge x=a_i\wedge x')$, contradicting the assumption that the set in \eqref{eq:finite} is finite. This completes the proof of the claim.

Since for every $n<\omega$ the set $X_n$ is infinite, recursively we can choose $\Set{x_n}{n<\omega}\subset X$ such that for each $n<\omega$, $x_n\in X_n\setminus\{x,x_0,\dots,x_{n-1}\}$. By Lemma \ref{lemma:char}, there exists $y\in U$ such that for each $n<\omega$ the set $\Set{i<\omega}{a_i\wedge y\wedge\neg x_n>\bbot}$ is finite. From now on, we follow the steps of Kanamori \cite[Theorem 1.10]{kanamori:ult}. For $n<\omega$ we define two natural numbers
\[
\gamma_n=\max\Set{\gamma<\omega}{(\forall i<\gamma)(a_i\wedge x=a_i\wedge x_n)}
\]
and
\[
\delta_n=\min\Set{\delta<\omega}{\gamma_n\le\delta\text{ and }(\forall i\ge\delta) (a_i\wedge y\le x_n)}.
\]
We also define the corresponding interval of natural numbers
\[
I_n=[\gamma_n,\delta_n),
\]
possibly empty.

Note that for each $n<\omega$ we have $n\le\gamma_n$, hence we can easily find by recursion an infinite set $W\subseteq\omega$ such that for all $n,m\in W$, if $I_n\cap I_m\neq\emptyset$ then $n=m$. Now there is clearly an infinite subset $Z\subseteq W$ with the property that
\[
z=\bigvee\Set*{a_i}{i\in\bigcup_{n\in Z}I_n}\notin U.
\]
Since $U$ is an ultrafilter, we have $x\wedge y\wedge\neg z\in U$; our goal is now to show that for every $n\in Z$
\begin{equation}\label{eq:final}
x\wedge y\wedge\neg z\le x_n.
\end{equation}
Let $n\in Z$ be given. As $\Set{a_i}{i<\omega}$ is a maximal antichain, to establish \eqref{eq:final} it is sufficient to prove that for any $i<\omega$
\[
a_i\wedge x\wedge y\wedge\neg z\le x_n.
\]
If $i$ is such that $a_i\le z$ then we are done, so let us assume $a_i\wedge\neg z>\bbot$. This implies that $i\notin I_n$, so there are now two possibilities. If $i<\gamma_n$ then by definition $a_i\wedge x\le x_n$. Otherwise, if $\delta_n\le i$ then again by definition $a_i\wedge y\le x_n$. This establishes \eqref{eq:final} for each $n\in Z$.

In conclusion, by taking $Y=\Set{x_n}{n\in Z}$, it follows that $Y$ is an infinite subset of $X$ such that $x\wedge y\wedge\neg z\le\bigwedge Y$ and therefore $\bigwedge Y\in U$. The thesis now follows from Proposition \ref{proposition:criterion}.
\end{proof}

\begin{corollary} Assume $\dd=2^{\aleph_0}$. Let $\B$ be a complete c.c.c.\ Boolean algebra of cardinality $2^{\aleph_0}$. If $\B$ has a dense subalgebra of cardinality $<2^{\aleph_0}$, then there exists a non-principal ultrafilter on $\B$ which is not Tukey maximal.
\end{corollary}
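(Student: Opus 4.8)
The plan is to combine the two principal results of this section, Proposition \ref{proposition:cexist} and Theorem \ref{theorem:weight}, the former producing an ultrafilter and the latter bounding its Tukey type. First I would apply Proposition \ref{proposition:cexist}: since $\dd=2^{\aleph_0}$ and $\B$ is a complete c.c.c.\ Boolean algebra of cardinality $2^{\aleph_0}$, there exists a non-principal coherent $P$-ultrafilter $U$ on $\B$. This $U$ will be the desired witness, so it remains only to check that it fails to be Tukey maximal.

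Next I would unwind what Tukey maximality means in this setting. By definition, $U$ is Tukey maximal precisely when $\langle\fin*{\B},\subseteq\rangle\le_{\mathrm{T}}\langle U,\ge\rangle$; since the directed set $\langle\fin*{\B},\subseteq\rangle$ depends, up to a relabelling bijection, only on $\abs{\B}=2^{\aleph_0}$, this is equivalent to $\langle\fin*{2^{\aleph_0}},\subseteq\rangle\le_{\mathrm{T}}\langle U,\ge\rangle$. Then I would invoke the hypothesis that $\B$ has a dense subalgebra of size $<2^{\aleph_0}$, which is exactly the statement $\pi(\B)<2^{\aleph_0}$, so that $\pi(\B)^+\le 2^{\aleph_0}$. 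A routine monotonicity observation now gives $\langle\fin*{\pi(\B)^+},\subseteq\rangle\le_{\mathrm{T}}\langle\fin*{2^{\aleph_0}},\subseteq\rangle$: the inclusion map $f$ together with the restriction $g\colon e\mapsto e\cap\pi(\B)^+$ witnesses the reduction, since $f(d)\subseteq e$ forces $d=d\cap\pi(\B)^+\subseteq e\cap\pi(\B)^+=g(e)$.

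Finally I would derive the contradiction. Theorem \ref{theorem:weight}, applied to the coherent $P$-ultrafilter $U$, yields $\langle\fin*{\pi(\B)^+},\subseteq\rangle\nleq_{\mathrm{T}}\langle U,\ge\rangle$. Were $U$ Tukey maximal, then by the previous paragraph together with transitivity of $\le_{\mathrm{T}}$ we would obtain
\[
\langle\fin*{\pi(\B)^+},\subseteq\rangle\le_{\mathrm{T}}\langle\fin*{2^{\aleph_0}},\subseteq\rangle\le_{\mathrm{T}}\langle U,\ge\rangle,
\]
directly contradicting Theorem \ref{theorem:weight}. Hence $U$ is a non-principal ultrafilter on $\B$ that is not Tukey maximal, as required.

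I do not anticipate any serious obstacle, as the substantive work is already contained in Theorem \ref{theorem:weight}; the only point demanding a little care is the passage from the cardinals $\abs{\B}$ and $\pi(\B)^+$ to the comparison of the associated finite-powerset directed sets, which is settled by the elementary monotonicity of $\kappa\mapsto\langle\fin*{\kappa},\subseteq\rangle$ under Tukey reducibility.
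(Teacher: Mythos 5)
Your proof is correct and is essentially the paper's own argument: the paper's proof is just ``By Proposition \ref{proposition:cexist} and Theorem \ref{theorem:weight}.'' You have merely filled in the (routine but worth checking) cardinality bookkeeping, namely that $\pi(\B)^+\le 2^{\aleph_0}=\abs{\B}$ and the monotonicity of $\kappa\mapsto\bigl\langle\fin*{\kappa},\subseteq\bigr\rangle$ under Tukey reducibility, which the authors leave implicit.
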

\begin{proof} By Proposition \ref{proposition:cexist} and Theorem \ref{theorem:weight}.
\end{proof}

The corollary gives that, assuming $\dd=2^{\aleph_0}$, for every cardinal $\aleph_0\le\kappa<2^{\aleph_0}$ there exists an ultrafilter on $\C_\kappa$ which is not Tukey maximal. However, this result does not apply to $\B_\omega$, because $\dd=2^{\aleph_0}$ implies that all dense subalgebras of $\B_\omega$ have cardinality $2^{\aleph_0}$ (by the fact that $\dd\le\cof(\N)$ and \cite[Theorem 2.5]{ckp:dense}). Therefore, in the next section we shall take a different approach.

\section{Non-maximal ultrafilters on the random algebra}\label{section:cinque}

This section complements the previous one by providing the construction of an ultrafilter on $\B_\omega$ which is not Tukey maximal, assuming Jensen's $\Diamond$.

\begin{definition}[Jensen \cite{jensen:fine}] Let $\Diamond$ be the following principle: there exists a sequence $\Seq{S_\alpha}{\alpha<\omega_1}$ such that $S_\alpha\subseteq\alpha$ and, for every $X\subseteq\omega_1$, the set $\Set{\alpha<\omega_1}{X\cap\alpha=S_\alpha}$ is stationary.
\end{definition}

The above principle easily implies $2^{\aleph_0}=\aleph_1$ and, by \cite[Lemma 6.5]{jensen:fine}, is true in the constructible universe $L$.

The first construction of non-maximal ultrafilters over $\omega$ using diamond principles is due to Milovich \cite[Theorem 3.11]{milovich:tukey}. Here we focus on $\B_\omega$, which for the sake of convenience we identify with $\mathcal{B}(^\omega2)\setminus\N$ during the proof of the next theorem. More explicitly, we shall not distinguish between a set $X\in\mathcal{B}(^\omega2)$ and its equivalence class $\eq{X}{\N}$, with the obvious convention that $\mu(X)=\mu\bigl(\eq{X}{\N}\bigr)$.

\begin{theorem} Assuming $\Diamond$, there exists an ultrafilter on $\B_\omega$ which is not Tukey maximal.
\end{theorem}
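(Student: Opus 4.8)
The plan is to build the ultrafilter $U$ on $\B_\omega$ by a transfinite recursion of length $\omega_1$, using the $\Diamond$-sequence to anticipate and defeat every potential Tukey reduction. By Proposition \ref{proposition:criterion} (with $\kappa=2^{\aleph_0}=\aleph_1$), the ultrafilter $U$ fails to be Tukey maximal exactly when every $X\subseteq U$ of size $\aleph_1$ contains an infinite subset $Y$ with $\bigwedge Y\in U$. Since $\abs{\B_\omega}=2^{\aleph_0}=\aleph_1$ under $\Diamond$, I would enumerate the Boolean algebra as $\B_\omega=\Set{b_\alpha}{\alpha<\omega_1}$ and construct $U$ as an increasing union $U=\bigcup_{\alpha<\omega_1}\mathcal{F}_\alpha$ of filters, where at stage $\alpha$ I decide whether $b_\alpha$ or $\neg b_\alpha$ enters, keeping each $\mathcal{F}_\alpha$ a filter with a generating set of size $<\aleph_1$ (so that it does not already determine the whole ultrafilter). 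The measure-theoretic density property, point \eqref{densedue} of Proposition \ref{proposition:dense}, will be the tool guaranteeing that I never accidentally force some $\bigwedge Y=\bbot$ prematurely when I wish to keep $Y$ bounded.

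The heart of the construction is the diagonalisation driven by $\Diamond$. A candidate witness to Tukey maximality is a set $X\subseteq U$ of size $\aleph_1$ every infinite subset of which is unbounded; coding subsets of $\B_\omega$ (equivalently, of $\omega_1$ after fixing the enumeration) by subsets of $\omega_1$, the $\Diamond$-sequence $\Seq{S_\alpha}{\alpha<\omega_1}$ will, on a stationary set of $\alpha$, correctly guess the initial segment $X\cap\alpha$ of any such prospective $X$. At such a stage I must act: having guessed (a piece of) a potential bad set $X$, I want to select an infinite $Y\subseteq X$ and commit $\bigwedge Y$ into the filter $\mathcal{F}_{\alpha+1}$, thereby destroying $X$ as a witness. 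The elements guessed so far all lie in $\mathcal{F}_\alpha\subseteq U$, so each has positive measure; the challenge is to extract from the guessed initial segment an \emph{infinite} subfamily $Y$ whose infimum is still nonzero and compatible with everything committed to $\mathcal{F}_\alpha$, so that $\mathcal{F}_\alpha\cup\{\bigwedge Y\}$ still generates a proper filter.

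The main obstacle, as I see it, is precisely this compatibility-with-positive-measure step. In a measure algebra a decreasing sequence of positive-measure sets can have infimum $\bbot$, so one cannot simply take any infinite $Y$; I expect to need a pigeonhole or measure-concentration argument to thin the guessed family down to an infinite $Y$ whose elements overlap in a set of measure bounded away from $0$. Concretely, the elements of $X$ that have been promised into $U$ will all contain, up to null sets, a common clopen piece of some fixed positive measure supplied by Proposition \ref{proposition:dense}\eqref{densedue}; by refining along the countable Boolean algebra $\Clop(^\omega2)$ and applying an infinite pigeonhole to these clopen approximations, I can arrange that infinitely many members of $Y$ share a common clopen lower bound $C$ with $\mu(C)>0$, whence $\eq{C}{\N}\le\bigwedge Y$ and $\bigwedge Y$ has positive measure. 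This is the step where the randomness of $\B_\omega$, rather than the category structure of $\C_\omega$, is genuinely used, and it is what forced the move from $\dd=2^{\aleph_0}$ in the previous section to the stronger hypothesis $\Diamond$ here.

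Finally, at limit stages I take unions, checking that the generating-set cardinality stays below $\aleph_1$, and at stages where $\Diamond$ either guesses wrongly or guesses an object that is not a genuine unbounded family I take no diagonalising action and simply decide $b_\alpha$ arbitrarily while preserving the filter. A routine verification then shows that $U=\bigcup_{\alpha<\omega_1}\mathcal{F}_\alpha$ is an ultrafilter, and that the stationarity in the definition of $\Diamond$ ensures every size-$\aleph_1$ subset $X\subseteq U$ was guessed and hence defeated at some stage, so no such $X$ witnesses Tukey maximality; by Proposition \ref{proposition:criterion}, $U$ is not Tukey maximal.
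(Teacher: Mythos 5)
Your overall architecture---a length-$\omega_1$ recursion deciding each element of $\B_\omega$, with $\Diamond$ guessing prospective Tukey-maximality witnesses $X$ and the construction committing an infimum of an infinite $Y\subseteq X$ into the filter, followed by an appeal to Proposition \ref{proposition:criterion}---is exactly the paper's. But the step you yourself identify as the heart of the matter is resolved incorrectly. Proposition \ref{proposition:dense}\eqref{densedue} does \emph{not} supply clopen lower bounds: it says a positive-measure set $B$ has relative measure at least $1-\varepsilon$ inside some non-empty clopen $C$, not that $\eq{C}{\N}\le\eq{B}{\N}$. Indeed there are Borel sets $B$ with $0<\mu(B\cap C)<\mu(C)$ for \emph{every} non-empty clopen $C$, so such a $B$ has no clopen lower bound modulo $\N$ at all; genuine clopen lower bounds are a feature of the Cohen algebra, via \eqref{denseuno}, and their failure in $\B_\omega$ is precisely why the random case is harder. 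So your pigeonhole (``infinitely many members of $Y$ share a common clopen lower bound $C$'') cannot begin. The natural fallback---infinitely many members each of relative measure $\ge 1-\varepsilon$ in a common clopen $C$---also fails, because for fixed $\varepsilon$ an infinite intersection of such sets can be null (independent sets of measure $1-\varepsilon$). What is actually needed, and what the paper does, is an adaptive recursion choosing the ordinals $\alpha_n$ one at a time with \emph{summable} errors $2^{-(n+2)}$, shrinking clopen witnesses $C_{n,m}$ whose measures stay bounded below by the infinite product $\prod_{\ell\ge 1}(1-2^{-\ell})>0$, while simultaneously controlling the relative measure of $F_m\cap\bigcap_{k<n}U_{\alpha_k}\cap U_\alpha$ for \emph{every} finite intersection $F_m$ of the previously committed filter elements; continuity from above then gives $\mu\bigl(F_m\cap\bigcap_{n<\omega}U_{\alpha_n}\bigr)>0$ for each $m$. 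This last point also answers the compatibility requirement your sketch leaves untouched: a common lower bound for $Y$ alone would not guarantee the finite intersection property with all of $\mathcal{F}_\alpha$.

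There is a second gap. At a guessing stage $\delta$ the guessed set $S_\delta=X\cap\delta$ is countable, and from a countable family of positive-measure sets with the finite intersection property one cannot in general extract an infinite subfamily with non-null intersection (independent sets of measure $\tfrac12$ again). Uncountability of $X$ is what powers the pigeonhole---uncountably many sets, only countably many clopen sets, measures bounded below on an uncountable subfamily---and the paper reconciles this with the countability of $S_\delta$ by running the entire selection inside a countable elementary submodel $M\preceq H_\kappa$ with $X,U\in M$, at a limit $\delta$ where $X\cap M=S_\delta$: elementarity keeps each chosen $X_n$ uncountable while forcing each chosen $\alpha_n$ into $X\cap M=S_\delta$, so that Case \eqref{caseone} of the construction really is triggered at $\delta$. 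Your proposal acts directly on the countable guessed segment, where the required $Y$ may simply fail to exist, and the closing appeal to stationarity cannot repair this. Without the submodel reflection (or an equivalent device) and without the quantitative bookkeeping above, the construction does not go through.
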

\begin{proof} By $\Diamond$, let us fix a sequence $\Seq{S_\alpha}{\alpha<\omega_1}$ such that $S_\alpha\subseteq\alpha$ and, for every $X\subseteq\omega_1$, the set $\Set{\alpha<\omega_1}{X\cap\alpha=S_\alpha}$ is stationary. As noted before, the existence of such a sequence implies $2^{\aleph_0}=\aleph_1$, hence we may enumerate $\B_\omega=\Set{B_\alpha}{\alpha<\omega_1}$. We proceed to construct recursively a sequence $\Seq{U_\alpha}{\alpha<\omega_1}$ such that, at each stage $\alpha<\omega_1$, the set $\Set{U_\beta}{\beta\le\alpha}\subset\B_\omega$ has the finite intersection property.

Let $U_0=\btop$. For $\alpha<\omega_1$, inductively the set $\Set{U_\beta}{\beta\le\alpha}$ has the finite intersection property, so we may choose $U_{\alpha+1}$ to be either $B_\alpha$ or $\neg B_\alpha$ in such a way that $\Set{U_\beta}{\beta\le\alpha+1}$ still has the finite intersection property. Suppose now $\delta<\omega_1$ is a limit ordinal; we distinguish two cases:
\begin{enumerate}
\item\label{caseone} There exists an infinite $Y\subseteq S_\delta$ such that $\Set{U_\alpha}{\alpha<\delta}\cup\bigl\{\bigcap_{\alpha\in Y}U_\alpha\bigr\}$ has the finite intersection property. Then choose such a $Y$ and define $U_\delta=\bigcap_{\alpha\in Y}U_\alpha$.
\item There is no such a $Y$. Then define $U_\delta=\btop$.
\end{enumerate}
This completes the recursive construction, whence it follows that $U=\Set{U_\alpha}{\alpha<\omega_1}$ is an ultrafilter on $\B_\omega$.

Now let $X\subseteq\omega_1$ be uncountable; we aim to find an infinite $Y\subseteq X$ such that $\bigcap_{\alpha\in Y}U_\alpha\in U$, then conclude by Proposition \ref{proposition:criterion} that $U$ is not Tukey maximal. By stationarity there exists, for some sufficiently large $\kappa$, a countable elementary substructure $M\preceq H_\kappa$ such that $\{X,U\}\subset M$, and a limit ordinal $\delta<\omega_1$ such that $X\cap M=S_\delta$. Now we only need to find an infinite $Y\subseteq X\cap M$ such that $\Set{U_\alpha}{\alpha<\delta}\cup\bigl\{\bigcap_{\alpha\in Y}U_\alpha\bigr\}$ has the finite intersection property, as this will imply that, at stage $\delta$, we are in Case \eqref{caseone} of the construction of $U$.

To simplify notation, as $\delta$ is countable, we may enumerate all the possible finite intersections from $\Set{U_\alpha}{\alpha<\delta}$ as $\Set{F_m}{m<\omega}$. By recursion, we shall construct for all $n<\omega$ an uncountable $X_n\in\mathcal{P}(X)\cap M$, a countable ordinal $\alpha_n\in X_n\cap M$, and non-empty clopen sets $\Seq{C_{n,m}}{m\le n}$. At each step $n$, the following crucial property will hold true:
\begin{equation}\label{eq:club}\tag{$\star_n$}
(\forall m\le n)(\forall\alpha\in X_n)\ \mu\Biggl(F_m\cap\bigcap_{k<n} U_{\alpha_k}\cap U_\alpha\cap C_{n,m}\Biggr)\ge\biggl(1-\frac{1}{2^{n+2}}\biggr)\mu(C_{n,m}).
\end{equation}
Once the recursive construction is complete, we shall conclude the proof by showing that $Y=\Set{\alpha_n}{n<\omega}$ has the desired property.

For the base case of the recursion, we first observe that for every $\alpha\in X$ the set $F_0\cap U_\alpha$ has positive measure. But $X$ is uncountable, hence by Proposition \ref{proposition:dense} there exists an uncountable $X_0\subseteq X$ and a non-empty clopen $C_{0,0}$ such that
\begin{equation}\tag{$\star_0$}
(\forall\alpha\in X_0)\ \mu(F_0\cap U_\alpha\cap C_{0,0})\ge\frac{3}{4}\mu(C_{0,0}),
\end{equation}
as we wanted. Now choose any $\alpha_0\in X_0$. At this point, we observe that the construction which has just been described can be performed within the model $M$, hence we can further assume that $X_0\in M$ and $\alpha_0\in M$.

For the general case we proceed similarly, but taking into account the decreasing measure of the sets $C_{n,m}$. More precisely, let $n>0$ and suppose inductively we have the countable ordinals $\Set{\alpha_k}{k<n}$, an uncountable $X_{n-1}\subseteq X$, and clopen sets $\Seq{C_{n-1,m}}{m<n}$. As in particular $\alpha_{n-1}\in X_{n-1}$, by $(\star_{n-1})$ we have for all $m<n$
\begin{equation}\label{eq:d}
\mu\Biggl(F_m\cap\bigcap_{k<n} U_{\alpha_k}\cap C_{n-1,m}\Biggr)\ge\biggl(1-\frac{1}{2^{n+1}}\biggr)\mu(C_{n-1,m}).
\end{equation}
Therefore, if $m<n$ then for every $\alpha\in X_{n-1}$
\begin{multline*}
\mu\Biggl(F_m\cap\bigcap_{k<n} U_{\alpha_k}\cap U_\alpha\cap C_{n-1,m}\Biggr)\\
\ge\mu\Biggl(F_m\cap\bigcap_{k<n} U_{\alpha_k}\cap C_{n-1,m}\Biggr)+\mu\Biggl(F_m\cap\bigcap_{k<n-1} U_{\alpha_k}\cap U_\alpha\cap C_{n-1,m}\Biggr)-\mu(C_{n-1,m})\\
\ge\biggl(1-\frac{1}{2^{n+1}}\biggr)\mu(C_{n-1,m})+\biggl(1-\frac{1}{2^{n+1}}\biggr)\mu(C_{n-1,m})-\mu(C_{n-1,m})\\
=\biggl(1-\frac{1}{2^n}\biggr)\mu(C_{n-1,m}),
\end{multline*}
where the second line follows from finite additivity of the measure $\mu$, whereas the third is a consequence of \eqref{eq:d} and $(\star_{n-1})$. Now, since $X_{n-1}$ is uncountable, by Proposition \ref{proposition:dense} again there exists an uncountable $X_n\subseteq X_{n-1}$ and non-empty clopens $\Seq{C_{n,m}}{m\le n}$ such that \eqref{eq:club} holds and furthermore:
\[
\text{if }m<n\text{, then }C_{n,m}\subseteq C_{n-1,m}\text{ and }\mu(C_{n,m})\ge\biggl(1-\frac{1}{2^n}\biggr)\mu(C_{n-1,m}).
\]
Now choose any $\alpha_n\in X_n\setminus\{\alpha_0,\dots,\alpha_{n-1}\}$. Again, we observe that each step of the recursive construction can be performed within the model $M$, hence we can further assume that $X_n\in M$ and $\alpha_n\in M$.

We have thus obtained $\Set{\alpha_n}{n<\omega}\subseteq X\cap M$, so the proof is complete once we show that, for each fixed $m<\omega$,
\[
\mu\Biggl(F_m\cap\bigcap_{n<\omega}U_{\alpha_n}\Biggr)>0.
\]
To do so, consider $\varepsilon=\prod_{\ell=1}^{\infty}\bigl(1 - \frac{1}{2^\ell}\bigr)>0$ and observe that, by our choice of the clopen sets, for every $n\ge m$ we have
\[
\mu(C_{n,m})\ge\prod_{\ell=1}^{n}\biggl(1 - \frac{1}{2^\ell}\biggr)\mu(C_{m,m})\ge\varepsilon\,\mu(C_{m,m}).
\]
Consequently, for every $n\ge m$
\[
\mu\Biggl(F_m\cap\bigcap_{k<n} U_{\alpha_k}\Biggr)\ge\frac{3}{4}\mu(C_{n,m})\ge\varepsilon\frac{3}{4}\mu(C_{m,m})
\]
and finally, by countable additivity of the measure $\mu$,
\[
\mu\Biggl(F_m\cap\bigcap_{n<\omega}U_{\alpha_n}\Biggr)=\lim_{n\to\infty}\mu\Biggl(F_m\cap\bigcap_{k<n} U_{\alpha_k}\Biggr)\ge\varepsilon\frac{3}{4}\mu(C_{m,m})>0,
\]
as desired. This shows that $Y=\Set{\alpha_n}{n<\omega}$ has the required properties and completes the proof.
\end{proof}

\begin{question} Does $2^{\aleph_0}=\aleph_1$ imply the existence of an ultrafilter on $\B_\omega$ which is not Tukey maximal?
\end{question}

\end{document}